\numberwithin{equation}{section}
\numberwithin{figure}{section}
\theoremstyle{plain}
\newtheorem{thm}{\protect\theoremname}
  \theoremstyle{definition}
  \newtheorem{example}[thm]{\protect\examplename}
  \theoremstyle{remark}
  \newtheorem{rem}[thm]{\protect\remarkname}
  \theoremstyle{plain}
  \newtheorem{prop}[thm]{\protect\propositionname}
  \theoremstyle{plain}
  \newtheorem{lem}[thm]{\protect\lemmaname}
  \theoremstyle{plain}
  \newtheorem{cor}[thm]{\protect\corollaryname}
\newcommand{\Cot}{{T}^*}
\newcommand{\id}{\operatorname{id}}
\newcommand{\R}{\mathbb R}
\newcommand{\Z}{\mathbb Z}
\newcommand{\graph}{\operatorname{gr}}
\newcommand{\complex}{{\mathbb C}}
\newcommand{\reals}{{\mathbb R}}
\newcommand{\integers}{{\mathbb Z}}
\newcommand{\im}{{\rm Im \,}}
\renewcommand{\ker}{{\rm Ker\, }}
\newcommand{\del}{\partial}
\newcommand{\arrows}{\,\lower1pt\hbox{$\longrightarrow$}\hskip-.24in\raise2pt
             \hbox{$\longrightarrow$}\,}
\newcommand{\Algdminus}{{\bf Algd^-}}
\newcommand{\Algdplus}{{\bf Algd^+}}
\newcommand{\Gpdminus}{{\bf Gpd^-}}
\newcommand{\Gpdplus}{{\bf Gpd^+}}
\newcommand{\Gpd}{{\bf Gpd}}
\newcommand{\SGpd}{{\bf SGpd}}
\newcommand{\Algd}{{\bf Algd}}
\newcommand{\Lie}{{\bf Lie}}
  \providecommand{\corollaryname}{Corollary}
  \providecommand{\examplename}{Example}
  \providecommand{\lemmaname}{Lemma}
  \providecommand{\propositionname}{Proposition}
  \providecommand{\remarkname}{Remark}
\providecommand{\theoremname}{Theorem}
\begin{document}
%\selectlanguage{english}

\title{integration of Lie algebroid comorphisms}

\author{\textbf{Alberto S. Cattaneo, Benoit Dherin and Alan Weinstein}}

\address{Institut f\"ur Mathematik\\
Universit\"at Z\"urich--Irchel\\
Winterthurerstrasse 190, CH-8057 Z\"urich\\
Switzerland}
\email{alberto.cattaneo@math.uzh.ch}

\address{ICMC-USP\\
Universidade de S\~ao Paulo\\
Avenida Trabalhador S\~ao-carlense, 400, Centro\\
CEP: 13566-590, S\~{a}o Carlos, SP\\
Brazil}
\email{dherin@icmc.usp.br}

\address{Department of Mathematics\\
University of California\\
Berkeley, CA 94720-3840}
\email{alanw@math.berkeley.edu}

\begin{abstract}
We show that the path construction integration of Lie algebroids by Lie groupoids is
an actual equivalence from the category of integrable Lie algebroids and
\textit{complete} Lie algebroid comorphisms to the category of source
$1$-connected Lie groupoids and Lie groupoid comorphisms. This allows
us to construct an actual symplectization functor in Poisson geometry.
We include examples to show that the integrability of comorphisms and 
Poisson maps may not hold in the absence of a completeness assumption.
\end{abstract}
\maketitle
\tableofcontents{}

\section{Introduction}

A classical result in differential geometry is that any finite dimensional
Lie algebra $\mathcal{G}$ can be integrated by a $1$-connected Lie
group $\Sigma(\mathcal{G})$ (conversely, any Lie group endows its
tangent space at the unit with the structure of a Lie algebra). This
bijective correspondence between finite dimensional Lie algebras and
$1$-connected finite dimensional Lie groups is actually the object
component of an integration functor
\[
\Sigma:{\bf {LieAlg}\longrightarrow{\bf {LieGp}}},
\]
which is an equivalence from the category of finite dimensional Lie
algebras to the category of finite dimensional $1$-connected Lie
groups. 

There are several generalizations of finite dimensional Lie algebras:
infinite-dimensional (e.g Banach) Lie algebras, Lie algebroids, Poisson
manifolds, and $L_{\infty}$-algebras, for instance. In each case,
it is natural to ask whether there is a corresponding integration
functor. For this, we need to find out the right notion for the objects
integrating these generalized Lie algebras as well as the right notion
of morphisms between them.

We will be concerned here with Lie algebroids and Poisson manifolds
(see \cite{mackenzie1987} for a general reference), each of whose
have natural integrating objects: Lie groupoids for Lie algebroids
and symplectic groupoids for Poisson manifolds. 

However, unlike finite dimensional Lie algebras, not all Lie algebroids
are integrable by Lie groupoids. In \cite{CF2004}, Crainic and Fernandes
worked out a criterion to select those that are. In this paper, we
will construct an integration functor for the class of \textit{integrable}
Lie algebroids. To allow non-integrable Lie algebroids to be in the
domain of an integration functor, one must consider integration by
microgroupoids (i.e. germs of groupoids) in the spirit of \cite{CDW2011}
or by differentiable stacks as in \cite{TsengZhu}. 

We will focus on an integration functor for Lie algebroids that translates
into an integration functor for Poisson manifolds (also called ``symplectization
functor'' by Fernandes \cite{fernandes2006}). For this, the main
ingredient is to replace the usual notion of morphisms between Lie
groupoids (i.e. smooth functors between the underlying groupoids,
see \cite{HM1990,mackenzie1987}) and their corresponding infinitesimal
version for Lie algebroids by that of \textit{comorphism}s. Lie algebroid
and Lie groupoid comorphisms were introduced by Higgins and Mackenzie
in \cite{HM1993}. (This notion for Lie groupoids had been studied
earlier under a different name and with a different, but equivalent,
definition by Zakrzewski in \cite{zakrzewski1990II} and Stachura
in \cite{Stachura2000}.) Already there, comorphisms were seen as
the ``correct'' notion of morphisms between Lie algebroids to be
used with applications to Poisson geometry in mind. 

From the perspective of Poisson geometry, Lie algebroid and Lie groupoid
morphisms are not very well-suited, since a Poisson map $\phi$ from
$X$ to $Y$ induces a Lie algebroid morphism from $T^{*}X$ to $T^{*}Y$
(that integrates to a symplectic groupoid morphism) only when $\phi$
is a diffeomorphism. This prevents us from constructing an integration
functor whose domain would contain all Poisson maps. On the other
hand, the cotangent map $T^{*}\phi$ to a Poisson map $\phi$ is always
a comorphism from $T^{*}X$ to $T^{*}Y$. 

There are also a number of facts independent of Poisson geometry that
make comorphisms the ``correct'' notion of morphisms between Lie
groupoids. For instance, a comorphism between Lie groupoids naturally
induces a group morphism between the corresponding groups of bisections
as well as a $C^{*}$-algebra morphism between the corresponding convolution
$C^{*}$-algebras (see \cite{Stachura2000}). This gives functors
from the category of Lie groupoids and comorphisms to the category
of groups and to the category of $C^{*}$-algebras (see \cite{Stachura2000}).
Moreover, the graph of a Lie groupoid comorphism is a monoid map in
the ``category'' of differentiable relations between the monoid
objects associated to the multiplication graph of the corresponding
Lie groupoids (see \cite{zakrzewski1990II}). 

On the other hand, in contrast to Lie algebroid morphisms, Lie algebroid
comorphisms do not always integrate to Lie groupoid comorphisms. The 
same holds in Poisson geometry, where completeness of Poisson maps 
insures integrability in terms of (symplectic) comorphisms. 
We will give an example of a non complete Lie algebroid comorphism 
that is also non integrable, and whose dual is a non complete and non integrable
Poisson map. 

Dazord in \cite{da:groupoide} already stated without proof that both \textit{complete}
Lie algebroid comorphisms and Poisson maps always do integrate to comorphisms. 
In \cite{zakrzewski1990II}, Zakrzewski proved that complete Poisson maps are integrable
to what he called ``morphisms of regular D$^*$-algebras," which turn out to be nothing but
symplectic comorphisms. 

More recently, Caseiro and Fernandes in \cite{CF} proved that a complete Poisson
map $\phi$ from integrable Poisson manifolds $X$ to $Y$ always integrates to a natural left action of the
symplectic groupoid $\Sigma(Y)$  on $X$ with moment map $\phi$. This action naturally induces an
embedded lagrangian subgroupoid integrating the graph of the Poisson map. 
Their proof, at contrast with the one of Zakrzewski which uses the method of characteristics,
is readily transposable to complete Lie algebroid comorphisms. 
They use the existence of lifting properties by complete Poisson maps for both admissible paths and their homotopies,
which also holds for complete Lie algebroid comorphisms and makes them resemble
``Serre fibrations'' in topology. 

These lifting properties  (Proposition \ref{prop: hom. lifting prop. I}
and \ref{prop: hom. lifting. prop. II}) will also be central to our main result (Theorem \ref{thm: Dazord}): namely, 
that the path construction of \cite{CF2001,CF2004,CF2003}, which associates a source $1$-connected
Lie groupoid $\Sigma(A)$ to each integrable Lie algebroid $A$, is an actual equivalence 
from the category of integrable Lie algebroids and complete Lie algebroid 
comorphisms to the category of source $1$-connected
Lie groupoids and Lie groupoid comorphisms. 

As a corollary, we obtain that Lie algebroid comorphisms are integrable if and only if they
are complete, which strengthens Dazord's statement. We show that this
implies a corresponding theorem in Poisson geometry, where the path
construction implements an equivalence between the category of integrable
Poisson manifolds and complete Poisson maps and the category of source
$1$-connected symplectic groupoids and symplectic comorphisms. From
this, we may conclude that Poisson maps are integrable if and only
if they are complete, which was already shown by Zakrzewski in the language
of regular D$^*$-algebras.

\subsubsection*{Acknowledgments}

We are grateful to P. Dazord, R. Fernandes, Z. Liu, K. Mackenzie,
I. Moerdijk, and C. Zhu for input on comorphism literature. A.S.C.
acknowledges partial support from SNF Grant 20-131813. B.D. acknowledges
partial support from NWO Grant 613.000.602, FAPESP grants 2010/15069-8
and 2010/19365-0 , as well as the hospitality of the UC Berkeley mathematics
department and S\~{a}o Paulo University ICMC. A.W. acknowledges partial
support from NSF grant DMS-0707137 and the hospitality of the Institut
Math\'ematique de Jussieu.

\section{Morphisms and comorphisms}

\label{sec-comorphisms} Much of what follows in this section may
already be found in \cite{ch-li:comorphisms,da:groupoide,HM1990,HM1993}.
We work in the smooth category. Let $a:A\to X$ and $b:B\to Y$ be
submersions, which we may think of as families of manifolds parametrized
by $X$ and $Y$. 

A map $\phi$ from $X$ to $Y$ together with a map $\Phi$ to $A$
from the pullback $f^{!}B=X\times_{Y}B$ will be called a \textbf{comorphism}
from $a$ to $b$; $\phi$ will be called the \textbf{core} \textbf{map}
of the comorphism. When the families are vector bundles and $\Phi$
is linear on fibres, we call $(\phi,\Phi)$ a vector bundle comorphism.
It induces a dual vector bundle map $\Phi^{*}$ from $a^{*}:A^{*}\rightarrow X$
to $b^{*}:B^{*}\rightarrow Y$ covering $\phi$ and a pullback map
$\Phi^{\dagger}$ to the space $\Gamma(A)$ of sections of $A$ from
$\Gamma(B)$. 

On the other hand, a \textbf{morphism} from $a$ to $b$ is simply
a bundle map, which we also denote by $(\phi,\Phi)$, where the core
map $\phi$ is the base map of the bundle map, and $\Phi$ is a collection
of smooth maps $\Phi_{x}$ from the fibers $A_{x}$ to $B_{\phi(x)}$.
When $a$ and $b$ are vector bundles and $\Phi$ is linear on fibers,
a morphism $(\phi,\Phi)$ is a vector bundle map. 

As observed in \cite{HM1993}, the notions of morphisms and comorphisms
for vector bundles are dual to each other in the sense that $(\phi,\Phi)$
is a comorphism from $a$ to $b$ if and only if $(\phi,\Phi^{*})$
is a morphism from $a^{*}$ to $b^{*}$ (and conversely). 

We now specialize the notion of morphisms and comorphisms to Lie algebroids
and Lie groupoids, and we introduce corresponding Lie functors (see
also \cite{HM1990,HM1993}).

\subsection{Lie algebroids\label{sub:The-Lie-functor}}

If $A$ and $B$ are Lie algebroids, a vector bundle comorphism $(\phi,\Phi)$
is called a \textbf{Lie algebroid comorphism} if $\Phi^{*}$ is a
Poisson map for the natural Lie-Poisson structures on the dual Lie
algebroids. Equivalently, $\Phi^{\dagger}$ is a homomorphism of Lie
algebras, and
\begin{equation}
\phi_{*}\circ\rho_{A}\circ\Phi=\rho_{B},\label{eq:anchor}
\end{equation}
where $\rho_{A}$ and $\rho_{B}$ are the anchor maps of respectively
$A$ and $B$. 

On the other hand, a vector bundle morphism $(\phi,\Phi)$ is called
a \textbf{Lie algebroid morphism} if $\Phi^{*}$ induces a chain map
from Lie algebroid complexes $\Gamma(\wedge^{\bullet}B)$ to $\Gamma(\wedge^{\bullet}A)$
(see \cite{HM1990}). 

We denote by $\Algdplus$ the category of Lie algebroids and Lie algebroid
\textit{morphisms} and $\Algdminus$ the category of Lie algebroids
and Lie algebroid \textit{comorphisms}. 

Observe that the graph of both a Lie algebroid morphism and a Lie
algebroid comorphism is a Lie subalgebroid of the Lie algebroid product
$A\times B$ and that morphisms and comorphisms coincide when the
core map is a diffeomorphism.
 
\begin{example}
\label{exa: Poisson} Let $\phi:X\rightarrow Y$ be a smooth map.
The tangent map $T\phi$ is a Lie algebroid morphism from $TX$ to
$TY$ (seen as algebroids with identity as anchor and the usual Lie
bracket on vector fields), while the cotangent map $T^{*}\phi$ is
a Lie algebroid comorphism from $T^{*}X$ and $T^{*}Y$ (seen as Lie
algebroids with zero anchor and zero bracket). $T\phi$ and $T^{*}\phi$
are both, at the same time, Lie algebroid morphisms and comorphisms
when $\phi$ is a diffeomorphism.
\end{example}

\begin{example}
If $A=TX$ and $B=TY$ carry the usual Lie algebroid structures (as
in the previous example), then $(\phi,\Phi)$ is a comorphism from
$TX$ to $TY$ when $\phi$ is a submersion and $\Phi$ is the horizontal
lift map of a flat Ehresmann connection over the open submanifold
$\phi(X)\subseteq Y$. 
\end{example}

\begin{example}
If $A$ and $B$ are Lie algebras, considered as Lie algebroids over
a point, then a Lie algebroid comorphism from $A$ to $B$ is a Lie
algebra morphism from $B$ to $A$, while a Lie algebroid morphism
from $A$ to $B$ is a Lie algebra morphism from $A$ to $B$. 
\end{example}

\subsection{Lie groupoids}

Now let $G\rightrightarrows X$ and $H\rightrightarrows Y$ be groupoids
with target and source maps $l_{G},r_{G},l_{H}$ and $r_{H}$. A comorphism
$(\phi,\Phi)$ from $r_{G}$ to $r_{H}$ is called \textbf{a comorphism
of groupoids} if
\begin{enumerate}
\item $\Phi$ takes unit elements to unit elements;
\item it is compatible with the target maps in the sense that, for any $(x,h)$
in the pullback $X\times_{Y}H,$ $(\phi\circ l_{G})(\Phi(x,h))=l_{H}(h);$
\item it is multiplicative in the sense that $\Phi(y,h_{1})\Phi(z,h_{2})=\Phi(z,h_{1}h_{2})$
whenever the products are defined; i.e., when $\phi(y)=l_{H}(h_{2}).$
\end{enumerate}
A groupoid comorphism as above may be represented by its graph $\gamma_{(\phi,\Phi)}$,
which is the smooth closed subgroupoid of $G\times H\rightrightarrows X\times Y$
consisting of those pairs $(g,h)$ for which $g=\Phi(r_{G}(g),h)$.
The objects of $\gamma_{(\phi,\Phi)}$ are just the points of the
graph of $\phi$, and the projection to $H$ of the source fibre of
$\gamma_{(\phi,\Phi)}$ over $(g,\phi(g))$ is a diffeomorphism onto
the source fibre of $H$ over $\phi(g)$. These properties characterize
those subgroupoids of $G\times H$ which are the graphs of comorphisms.

\begin{rem}
Zakrzewski in \cite{zakrzewski1990II} introduced the notion of regular
$D^{*}$-algebra and showed that it coincides with that of Lie groupoid,
observing though that their natural morphisms do not correspond to
Lie groupoid morphisms. $D^{*}$-algebra morphisms were further studied
by Stachura in \cite{Stachura2000}, who called them simply ``groupoid
morphisms.'' From Lemma 4.1 in \cite{zakrzewski1990II} and Proposition
2.6 in \cite{Stachura2000}, one sees that $D^{*}$-algebra morphisms
are exactly the Lie groupoid comorphisms introduced later on by Higgins
and Mackenzie in \cite{HM1993}. However, neither Zakrzewski nor Stachura
discussed a corresponding notion for Lie algebroids. 
\end{rem}

A \textbf{morphism of Lie groupoids} is a functor between the underlying
groupoids, whose object and morphism components are smooth. 

We denote by $\Gpdplus$ the category of Lie groupoids and Lie groupoid
\textit{morphisms} and $\Gpdminus$ the category of Lie groupoids
and Lie groupoid \textit{comorphisms}. 

Correspondingly, there are two Lie functors
\[
\Lie:\Gpd^{\pm}\rightarrow\Algd^{\pm},
\]
as defined in \cite{HM1993,mackenzie1987}, which agree on objects
(i.e. they both send a Lie groupoid to its associated Lie algebroid)
but one sends morphisms to morphisms while the other sends comorphisms
to comorphisms. Geometrically though, the morphism component of both
functors can be defined the ``same way,'' using the object component.
Namely, the underlying graph $\gamma_{(\phi,\Phi)}$ of a groupoid
morphism or a groupoid comorphism (which, in both cases, we denote
by $(\phi,\Phi)$) from $G$ to $H$ is itself a groupoid (actually,
a subgroupoid of $G\times H$). Then $\Lie(\gamma_{(\phi,\Phi)})$
is a subalgebroid of $\Lie(G)\times\Lie(H)$, which is the graph of
a Lie algebroid morphism when $(\phi,\Phi)$ is a Lie groupoid morphism
and the graph of a Lie algebroid comorphism when $(\phi,\Phi)$ is
a Lie groupoid comorphism. 

The two Lie functors are essentially the same on Lie algebras, since
morphisms and comorphisms are the same in this case except for arrow
direction.

\subsection{Integrability and completeness}

We say that a Lie algebroid comorphism between integrable Lie algebroids
is \textbf{integrable} if it is in the image of the Lie functor. This
means that the (possibly immersed) Lie subgroupoid integrating the
comorphism graph (which is a Lie subalgebroid) is, at the same time,
a closed embedded Lie subgroupoid \textit{and} a comorphism (the latter
implying the former). 

If $A$ and $B$ are the Lie algebroids of groupoids $G$ and $H$,
then every Lie algebroid comorphism from $A$ to $B$ may be integrated
locally to a groupoid comorphism from $G$ to $H$. In contrast with
Lie algebroid morphisms, which are always integrable to Lie groupoid
morphisms under a simple connectivity assumption (see \cite[Appendix]{MX2000}
for instance), the global situation for Lie algebroid comorphisms
is more complicated. The following example gives a Lie algebroid comorphism
whose graph integrates, as a Lie algebroid, to an embedded Lie subgroupoid
that is not the graph of a Lie groupoid comorphism.

\begin{example}
The inclusion $i$ of an open subset $X$ in a manifold $Y$ yields
the Lie algebroid comorphism $(i,\id)$ from $TX$ to $TY$ with the
natural Lie algebroid structure. The integration of the Lie subalgebroid
$\gamma_{(i,\id)}$ is the embedded subgroupoid
\[
\Big\{(x,x,x,x):\: x\in X\Big\}\rightrightarrows X\times Y
\]
of the groupoid product $(X\times X)\times(Y\times Y)\rightrightarrows X\times Y$.
This is not the graph of a comorphism, although there are partially
defined maps (namely the identity restricted to $X$) from $\{x\}\times Y$
to $\{x\}\times X$ for each $x\in X$, the union of whose graphs
is the integrating subgroupoid. 
\end{example}

Although ``partially defined'' Lie groupoid comorphisms as in the
example above still compose, and thus form a category, even worse
situations can arise. In general, a Lie algebroid comorphism can be
integrated only to what Dazord calls a ``relation,'' and which we
will call a \textbf{hypercomorphism}. A hypercomorphism from $G\rightrightarrows X$
to $H\rightrightarrows Y$ consists of a map $\phi:X\to Y$ and a
groupoid $R$ over the graph of $\phi$ along with a homomorphism
to $G\times H$ which is an immersion such that the projection to
$H$ is \'{e}tale between source fibres of $R$ and $H$. It is a comorphism
just when these maps between source fibres are diffeomorphisms. The
image of the immersion $R\to G\times H$ is a subgroupoid which can
sometimes be neither smooth nor closed, as we will see in the next
section.

As we will show in Theorem \ref{thm: Dazord}, global integrability
in terms of Lie groupoid comorphisms is guaranteed if the source fibres
of $H$ are 1-connected and the Lie algebroid comorphism is \textbf{complete}.
This means that the pullback map on sections takes complete sections
of $B$ to complete sections of $A$, where a section of a Lie algebroid
is called complete if the anchor maps it to a complete vector field.%
\footnote{If $A$ is integrable to a Lie groupoid $G$, completeness of a section
of $A$ means that the section is the initial derivative of a 1-parameter
group of bisections of $G$.%
} 

This result on global integrability was first announced in \cite{da:groupoide}
without proof. To the best of our knowledge, such a proof has never
appeared, although very close results have been achieved in the context of Poisson geometry for complete Poisson maps 
(which induce complete Lie algebroid comorphisms as we shall see in Section \ref{sec:Application-to-poisson})
by Caseiro and Fernandes in \cite{CF} and by Zakrzewski  in \cite{zakrzewski1990II}.
We will give one in Section \ref{sub:Embeddability} using the path integration techniques developed recently in \cite{CF2001,CF2004}.

We can already see the following: 

\begin{prop}
\label{prop: Lie}Let $G$ and $H$ be Lie groupoids over $X$ and
$Y$ respectively, and let $(\phi,\Phi)$ be a groupoid comorphism
from $G$ to $H$. Then $(\phi,T\Phi)=\Lie(\phi,\Phi)$ is a \textbf{complete}
comorphism from $\Lie(G)$ to $\Lie(H)$, where
\[
(T\Phi)(x,v):=D_{2}\Phi(x,\phi(x))v,
\]
and $D_{2}$ denotes the derivative w.r.t. the second argument. In
other words, integrable Lie algebroid comorphisms are complete. 
\end{prop}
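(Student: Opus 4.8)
The plan is to prove completeness through the characterization recorded in the footnote: a section of a Lie algebroid $\Lie(G)$ is complete precisely when it is the initial derivative of a $1$-parameter group of bisections of $G$. So, starting from an arbitrary complete section $s\in\Gamma(\Lie(H))$, I would integrate it to a $1$-parameter group of bisections of $H$, push this group through the comorphism to a $1$-parameter group of bisections of $G$, and check that the initial derivative of the resulting group is exactly the pulled-back section $\Phi^{\dagger}(s)=(T\Phi)(s\circ\phi)$, i.e.\ $\Phi^{\dagger}(s)(x)=(T\Phi)(x,s(\phi(x)))$.

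First I would record the map on bisections induced by the groupoid comorphism. If $\sigma\colon Y\to H$ is a section of the source map $r_{H}$, then $r_{H}(\sigma(\phi(x)))=\phi(x)$, so $(x,\sigma(\phi(x)))\in X\times_{Y}H$ and the formula $\overline{\sigma}(x):=\Phi(x,\sigma(\phi(x)))$ defines a section of $r_{G}$ (since $r_{G}\circ\Phi=\mathrm{pr}_{X}$), defined on all of $X$. The essential algebraic input is that $\sigma\mapsto\overline{\sigma}$ respects products: using multiplicativity (condition (3)) together with the target compatibility (condition (2)), one checks the identity $\overline{\sigma\sigma'}(x)=\overline{\sigma}\big((l_{G}\circ\overline{\sigma'})(x)\big)\cdot\overline{\sigma'}(x)$ as an equality of sections of $r_{G}$, while condition (1) gives that the unit section of $H$ is sent to the unit section of $G$.

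Now let $\sigma_{t}$ be the $1$-parameter group of bisections of $H$ integrating $s$, which exists for all $t$ because $s$ is complete, and set $\overline{\sigma}_{t}:=\overline{\sigma_{t}}$; this family is globally defined and jointly smooth in $(t,x)$, since $\Phi$, $\phi$ and $(t,y)\mapsto\sigma_{t}(y)$ are smooth. Applying the product identity to $\sigma_{t}$ and $\sigma_{-t}=\sigma_{t}^{-1}$ and then composing with $l_{G}$ shows that $l_{G}\circ\overline{\sigma}_{t}$ and $l_{G}\circ\overline{\sigma}_{-t}$ are mutually inverse, so each $\overline{\sigma}_{t}$ is a genuine bisection and $t\mapsto\overline{\sigma}_{t}$ is a $1$-parameter group in the bisection group of $G$. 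Finally, differentiating $\overline{\sigma}_{t}(x)=\Phi(x,\sigma_{t}(\phi(x)))$ at $t=0$, and using $\sigma_{0}(\phi(x))=1_{\phi(x)}$ together with $\frac{d}{dt}\big|_{0}\sigma_{t}(\phi(x))=s(\phi(x))$, gives
\[
\frac{d}{dt}\Big|_{0}\overline{\sigma}_{t}(x)=D_{2}\Phi(x,\phi(x))\,s(\phi(x))=(T\Phi)\big(x,s(\phi(x))\big)=\Phi^{\dagger}(s)(x),
\]
by the very definition of $T\Phi$. Hence $\Phi^{\dagger}(s)$ is the initial derivative of the $1$-parameter group $\overline{\sigma}_{t}$ and is therefore complete, which is the assertion.

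The main obstacle is the bisection property in the previous step. Global existence for all $t$ is automatic here: $\sigma_{t}$ is defined for all $t$ by completeness of $s$, and $\Phi$ is defined on the whole pullback, so the difficulty is not existence but showing that the fiberwise formula $\overline{\sigma}_{t}(x)=\Phi(x,\sigma_{t}(\phi(x)))$ yields an honest bisection, i.e.\ that $l_{G}\circ\overline{\sigma}_{t}$ is a diffeomorphism. This invertibility is a global fact that cannot be read off pointwise and must be extracted from multiplicativity via the group relation $\overline{\sigma}_{t}\,\overline{\sigma}_{-t}=\overline{\sigma}_{0}=\id$. In particular, the anchor compatibility (\ref{eq:anchor}), which only makes $\rho_{A}(\Phi^{\dagger}(s))$ and $\rho_{B}(s)$ be $\phi$-related, is by itself insufficient—as the open-inclusion example shows—and it is precisely conditions (1)--(3) of a groupoid comorphism that upgrade $\phi$-relatedness to completeness.
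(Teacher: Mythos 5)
Your proof is correct and follows essentially the same route as the paper's: both transport the globally defined one\,-parameter family generated by the complete section $s$ from $H$ to $G$ through $\Phi$ and identify its initial derivative with $\Phi^{\dagger}s$; your one-parameter group of bisections $\overline{\sigma}_{t}$ is exactly the family underlying the paper's left-invariant flow $\bar{\Psi}_{t}^{G}(g)=L_{g}\big(\Phi(r_{G}(g),\Psi_{t}^{H}(\phi(r_{G}(g))))\big)$. You merely repackage it: you spell out the multiplicativity and bisection checks that the paper leaves implicit, and you conclude via the footnote's bisection characterization of completeness where the paper passes through the induced flow on $X$ of $\rho_{A}(T\Phi)^{\dagger}s$ -- cosmetic differences only.
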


\begin{proof}
To simplify notation, set $A=\Lie(G)$ and $B=\Lie(H)$. Let $s$
be a complete section of $B$. It induces a \textit{complete} left-invariant
vector field $\xi_{H}^{s}$ on the integrating groupoid $H$ (see
\cite[Appendix]{KS1072}), whose corresponding left-invariant flow
$\Psi_{t}^{H}$ exists thus for all $t$. Using $(\phi,\Phi)$, we
define a left-invariant flow on $G$, which also exists for all times:
\[
\bar{\Psi}_{t}^{G}(g)=L_{g}\Big(\Phi\Big(r_{G}(g),\Psi_{t}^{H}(\phi(r_{G}(g))\Big)\Big),
\]
where $L_{g}(g')=gg'$ is the left-translation in $G$. 

On the other hand, the section $(T\Phi)^{\dagger}s$ induces a left-invariant
vector field $\xi_{G}^{(T\Phi)^{\dagger}s}$ on $G$, whose flow $\Psi_{t}^{G}$
projects on the flow $\Psi_{t}^{X}$ on $X$ of $\rho_{A}(T\Phi)^{\dagger}s$,
that is, 
\[
\Psi_{t}^{X}(l_{G}(g))=l_{G}(\Psi_{t}^{G}(g)).
\]
What remains to be proven is that $\Psi_{t}^{G}$ coincides with $\bar{\Psi}_{t}^{G}$:
Since the latter exists for all $t$, this would imply that $\Psi_{t}^{X}$
exists for all $t$ and, thus, that the image $(T\Phi)^{\dagger}s$
of a complete section $s$ by $(\phi,T\Phi)$ is complete. To see
this, let us check that both flows are flows of the same vector field.
Namely, since $\bar{\Psi}_{t}^{G}$ is left-invariant, we have that
\begin{eqnarray*}
\frac{d}{dt}_{\big|t=0}\bar{\Psi}_{t}^{G}(g) & = & DL_{g}(r_{G}(g))D_{2}\Phi(r_{G}(g),\phi(r_{G}(g)))\,\frac{d}{dt}_{\big|t=0}\Psi_{t}^{H}(\phi(r_{G}(g))),\\
 & = & DL_{g}(r_{G}(g))(T\Phi)(r_{G}(g),s(\phi(r_{G}(g))),\\
 & = & DL_{g}(r_{G}(g))((T\phi)^{\dagger}s)(r_{G}(g)),
\end{eqnarray*}
which, by definition, coincides with $\frac{d}{dt}_{\big|t=0}\Psi_{t}^{G}(g)$.
\end{proof}

\begin{example}
Let $(\phi,\Phi)$ be a comorphism between tangent bundle Lie algebroids
$TX$ and $TY$. As noted above, this corresponds to a flat Ehresmann
connection, i.e. a ``horizontal'' foliation of $X$ for which the
projection of each leaf to $Y$ is \'{e}tale. The comorphism is complete
when the connection is complete in the sense that these projections
are all covering maps, i.e. when each path $\sigma:[0,1]\to X$ has
a horizontal lift starting at any point in $\phi^{-1}(\sigma(0))$.

To integrate this comorphism to a hypercomorphism between the fundamental
groupoids $\pi(X)$ and $\pi(Y)$ integrating $TX$ and $TY$ respectively,
we let $R$ be the leafwise fundamental groupoid of the foliation
of $X$. This is a ($1$-connected, but possibly non-Hausdorff) Lie
groupoid over $X$ and may hence be considered as a groupoid over
the graph of $\phi:X\to Y$. An element of $R$ is a homotopy class
of paths with fixed endpoints and contained in a single leaf of the
foliation. Let us call these ``foliated paths''. Mapping each such
class of foliated paths to the homotopy class of paths in $X$ (without
the ``leafwise'' restriction) in which it is contained, and to the
class of the image in $Y$, is a groupoid morphism from $R$ to $G\times H$.
Restricting this morphism to a source fibre of $R$ and projecting
to $\pi(Y)$ takes the homotopy classes of foliated paths beginning
at some $x\in X$ to the homotopy classes of paths in $Y$ beginning
at $\phi(x)$. A neighborhood, in a source fibre of $R$, of the class
of a foliated path $\sigma$ may be identified with a neighborhood
of $\sigma(1)$ in its leaf, and a neighborhood of the class of the
projected path may be identified with a neighborhood of $\phi(\sigma(1))$
in $Y$. The projection from the first neighborhood to the second
is \'{e}tale by the definition of a flat Ehresmann connection, so the
requirements for $R$ to be a hypercomorphism are met. We may describe
the relation $R$ in rough terms by saying that it takes a point $x\in X$
and a path $\rho$ in $Y$ beginning at $\phi(x)$ to its horizontal
lift through $x$. But this horizontal lift may not exist if $(\phi,\Phi)$
is not complete, and it might not be unique since a homotopy of paths
in $Y$ may not have a horizontal lift in the absence of completeness.
\end{example}

\section{An example\label{sec:The-example}}

We give in this section an example of an Ehresmann connection, the
graph of whose integration is neither closed nor embedded.

Let $Y$ be $\reals^{2}$ with cartesian coordinates $(x,y)$ and
polar coordinates $(r,\theta)$. $X$ will be an open subset of $\reals^{2}\times\complex\times\reals$
with polar coordinates $(r,\theta)$ on the first factor, a complex
coordinate $z=Re^{i\Theta}$ on the second, and real coordinate $h$
on the third one. As the notation suggests, $\phi$ will be the projection
on the first factor. $X$ and $Y$ will be $1$-connected, so the
source 1-connected groupoids integrating $TX$ and $TY$ will be $X\times X$
and $Y\times Y$. Since $Y$ is $1$-connected, the leaves of any
foliation defined by a complete Ehresmann connection over $Y$ are
simply connected and therefore have trivial holonomy.

We define $X$ as $\reals^{2}\times\complex\times\reals\setminus J$,
where $J$ is the three-dimensional slab $\{(r,\theta,R,\Theta,h)|r=0,\,-1\leq h\leq1\}$.
Although $J$ is of codimension $2$, the restriction on $h$ leaves
$X$ simply connected, so that its fundamental groupoid is still $X\times X$.
Nevertheless, we can construct an interesting Ehresmann connection
for the submersion $\phi:X\to Y$. For $-1\leq h\leq1$, the horizontal
subspaces of the connection are spanned by the vector fields $\del/\del r$
and $\del/\del\theta+\nu(h)\del/\del\Theta$, where $\nu$ is a smooth
function, not identically zero, supported in the interval $-\frac{1}{2}\leq h\leq\frac{1}{2}.$
This makes sense since $r$ is not zero for these values of $h$.
Outside the support of $\nu$, the vector fields $\del/\del r$ and
$\del/\del\theta$ may be replaced by the cartesian coordinate vector
fields $\del/\del x$ and $\del/\del y$, which extend to the entire
$(x,y)$ plane.

We will think of our Ehresmann connection as a family, parametrized
by $h$, of unitary connections on the trivial complex Lie bundle
over $Y$ with fibre coordinate $z$. The connection form in this
description is $i\nu(h)d\theta$, and the holonomy around a loop encircling
the origin in the $(x,y)$ plane is multiplication by $e^{2\pi i\nu(h)}.$
In the region where $-1\leq h\leq1$ (and so $r$ is not zero), each
leaf lies in a fixed level of $R$ and $h$ and is a covering of the
punctured $(r,\theta)$ plane. The covering is a diffeomorphism if
$R=0$. For positive $R$, the covering has $k$ sheets when $\nu(h)$
has order $k$ as an element of $\reals/\integers$; this includes
the possibilities $k=1$ and $k=\infty$. Over the region where $-1<h<1$,
the leafwise fundamental groupoid may be parametrized by 
\[
\Gamma=(\reals^{+}\times S^{1})\times(\reals^{+}\times\reals)\times\complex\times(-1,1).
\]
 The element 
\[
\gamma=(r,\theta,r',\tau,z,h)
\]
of $\Gamma$ corresponds to the homotopy class of the horizontal path
\[
t\mapsto(r+(r'-r)t,\theta+\tau t,e^{i\nu(h)\tau t}z,h),~0\leq t\leq1.
\]
Thus, the source map is 
\[
(r,\theta,r',\tau,z,h)\mapsto(r,\theta,z,h),
\]
and the target is 
\[
(r,\theta,r',\tau,z,h)\mapsto(r',\theta+\tau,e^{i\nu(h)\tau}z,h).
\]
 The unit elements of the groupoid are defined by the conditions $r=r'$
and $\tau=0$, while the isotropy groups are defined by $r=r'$, $\tau\in2\pi\integers$,
and $\nu(h)\tau\in2\pi\integers$.

We now look at the leafwise fundamental groupoid as the integration
of the Lie algebroid comorphism given by the flat Ehresmann connection.
The Lie algebroid sits inside $TX\times TY$; since $X$ and $Y$
are simply connected, the integrating subgroupoid $S$ should sit
inside $X\times X\times Y\times Y$; it is the image of $\Gamma$
under the target-source map 
\[
(r,\theta,r',\tau,z,h)\mapsto\Big((r,\theta,z,h),(r',\theta+\tau,e^{i\nu(h)\tau}z,h),(r,\theta),(r',\theta+\tau)\Big).
\]
To study the immersion of $\Gamma$ into $X\times X\times Y\times Y$,
we can forget about the last two factors, since they are redundant
(namely, the image of $\Gamma$ lies in the graph of $\phi\times\phi$,
which can be identified with $X\times X$). This image consists of
all $8$-tuples $(r,\theta,z,h,r',\theta',z',h')$ for which there
exists $\tau$ such that $\theta'=\theta+\tau$, $z'=e^{i\nu(h)\tau}z,$
and $h'=h$. The two conditions involving $\tau$ can be combined,
with the elimination of $\tau$, to give $z'=e^{i\nu(h)(\theta'-\theta)}z.$
Where $z$ is nonzero, these define, for each $h$, a hypersurface
in the $4$-torus with coordinates $(\theta,\arg z,\theta',\arg z')$.
The subgroupoid $S\subset X\times X\times Y\times Y$ sits as a family
of these hypersurfaces inside the $8$-dimensional submanifold defined
by $|z'|=|z|$ and $h'=h$, which is a bundle of these $4$-tori over
the space parametrized by $(r,z,h)$.

From this description, we see immediately that $S$ is not closed.
In fact, when $\nu(h)$ is irrational, each of our hypersurfaces is
dense but not closed in its $4$-torus. To see that $S$ has nontrivial
self-intersections, we must look at the section $z=0$ of our complex
line bundle, since otherwise we are simply dealing with flat hypersurfaces
in tori. In fact, when $z=0$, adding an integer multiple of $2\pi$
to $\tau$ does not change the value of the target-source map, but
it does change the image of the derivative as long as $\nu(h)$ is
not an integer. This results in the sought-for nontrivial self-intersections.

\section{Path construction\label{sec:Path-construction}}

In this section, we start by briefly recalling the integration of
Lie algebroids by Lie groupoids in terms of quotients of certain admissible
path sets by homotopies, as in \cite{CF2001,CF2003}. We explain how
this path construction allows us to integrate comorphisms between
Lie algebroids to comorphisms between Lie groupoids. Then we show
that a complete Lie algebroid comorphism from $A$ to $B$ allows
us to lift admissible paths and homotopies in $B$ to $A$ (Proposition
\ref{prop: hom. lifting prop. I} and \ref{prop: hom. lifting. prop. II}).
These lifting properties, which make complete comorphisms resemble
``Serre fibrations,'' will be the main ingredients in the proof
of Theorem \ref{thm: Dazord} in the next section. 

Similar lifting properties in the context of Poisson map integration have already 
been considered by Caseiro and Fernandes in \cite{CF}.

\subsection{Lie algebroid integration}

All the source $1$-connected Lie groupoids integrating an integrable
Lie algebroid $A\rightarrow X$ are isomorphic to the following construction
in terms of homotopy classes of paths \cite{CF2001,CF2003}. Consider
the space $\mathcal{P}(A)$ of admissible paths; i.e., the set of
paths $g:[0,1]\rightarrow A$, $g(t)=(x(t),\eta(t))$, where $x(t)\in X$
and $\eta(t)$ lies in the fiber of $A$ over $x(t)$, such that
\[
\frac{dx(t)}{dt}=\rho(x(t))\eta(t),
\]
where $\rho$ is the anchor map of $A$. The source $1$-connected
Lie groupoid integrating $A$ can be realized as the quotient of $\mathcal{P}(A)$
by a homotopy relation $\thicksim$ that fixes the endpoints of the
base component of the admissible path (see \cite{CF2001,CF2003}).
More precisely, $(x_{1}(t),\eta_{1}(t))$ is homotopic to $(x_{2}(t),\eta_{2}(t))$
iff there is a family 
\[
(x_{1}(t),\eta_{1}(t))\;\overset{s=0}{\longleftarrow}\;(x(t,s),\eta(t,s))\;\overset{s=1}{\longrightarrow}\;(x_{2}(t),\eta_{2}(t))
\]
of admissible paths parametrized by $s\in[0,1]$ that satisfies the
following condition: There exists a section $\beta$ of $A$ defined
along $x(t,s)$ that vanishes for $t=0,1$, such that, locally, 
\begin{eqnarray}
\frac{\partial x^{i}(t,s)}{\partial s} & = & \rho_{a}^{i}(x(t,s))\beta^{a}(t,s),\label{eq: x-comp}\\
\frac{\partial\eta^{c}(t,s)}{\partial s} & = & \frac{\partial\beta^{c}(t,s)}{\partial t}+f_{ab}^{c}(x(t,s))\beta^{a}(t,s)\eta^{b}(t,s),\label{eq: p-comp}
\end{eqnarray}
where $\rho_{a}^{i}(x):U\rightarrow\R$ and $f_{ab}^{c}(x):U\rightarrow\R$
are the structure functions of respectively the anchor map and the
Lie bracket on the sections of $A$ expressed in terms of a system
of trivializing sections $e_{a}:U\rightarrow A_{|U}$ (where $a$
ranges from $1$ to the dimension of the fibers in $A$) over the
local patch with coordinates $x^{i}$. 

We denote by $\Sigma(A)$ the quotient of $\mathcal{P}(A)$ by this
homotopy relation and by $[g]$ the homotopy class of $g$. Since
$A$ is assumed to be integrable, $\Sigma(A)$ is a Lie groupoid over
$X$, whose source and target maps $s,t:\Sigma(A)\rightrightarrows X$
are given by the endpoints of the path projection on the base: $s([g])=x(0)$
and $t([g])=x(1)$. The groupoid product is given by concatenation
of paths $[g][g']=[gg']$, where $g\in[g]$ and $g'\in[g']$ are two
representatives whose ends $t([g])=s([g'])$ match smoothly, and where
\[
(gg')(t)=\left\{ \begin{array}{cc}
2g(2t), & 0\leq t\leq\frac{1}{2},\\
2g'(2t-1), & \frac{1}{2}<t\leq1.
\end{array}\right.
\]
From now on, we will reserve the notation $\Sigma(A)\rightrightarrows X$
for the source $1$-connected Lie groupoid integrating $A$ coming
from the construction above. 

Note that $\Sigma(A)$ exists as a groupoid but not as a manifold
if $A$ is not integrable: for non-integrable Lie algebroids, $\Sigma(A)$
can be realized as a stack (see \cite{TsengZhu}).

\subsection{Comorphism integration\label{sub: path construction}}

Suppose that $A'\rightarrow X'$ is a subalgebroid of an integrable
Lie algebroid $A\rightarrow X$ with integrating $1$-connected Lie
groupoid $\Sigma(A)\rightrightarrows X$. Then $A'$ is automatically
integrable as a Lie algebroid, and we can take its integrating Lie
groupoid to be the one obtained by the path construction; namely,
$\Sigma(A')\rightrightarrows X'$. An admissible path in $A'$ is
by definition also an admissible path in $A$. Moreover, if two admissible
paths are homotopic in $A'$, they also are homotopic in $A$. Therefore,
we have a natural immersion
\[
\iota:\Sigma(A')\rightarrow\Sigma(A),
\]
which is a groupoid morphism. However, this map is general not an
embedding nor is its image a closed submanifold. When the Lie subalgebroid
$A'$ is \textit{over the same base} as $A$, then Moerdijk and Mr\v{c}un
in \cite{MM2006} gave a necessary and sufficient condition for $\iota$
to be a closed embedding.

The situation is similar for the integration of a comorphism $(\phi,\Phi)$
from a Lie algebroid $A\rightarrow X$ to a Lie algebroid $B\rightarrow Y$,
since a comorphism graph $\gamma_{(\phi,\Phi)}$ is a Lie subalgebroid
(over the graph of $\phi$) of the direct product of the Lie algebroids
$A$ and $B$. If $A$ and $B$ are integrable, we can thus realize
the hypercomorphism between the Lie groupoids $\Sigma(A)$ and $\Sigma(B)$
in terms of the path construction as the groupoid immersion
\[
\iota:\Sigma(\gamma_{(\phi,\Phi)})\rightarrow\Sigma(A)\times\Sigma(B).
\]
Section \ref{sec:The-example} gave an explicit example of a comorphism
for which $\iota$ is not an embedding and its image is not a closed
submanifold. 

Let us now describe $\Sigma(\gamma_{(\phi,\Phi)})$ in more explicit
terms. It can be realized as the set of homotopy classes $[\gamma]$
of paths $\gamma(t)=(g(t),h(t))$, where $g(t)$ is an admissible
path in the Lie algebroid $A\rightarrow X$ and $h(t)$ is an admissible
path in the Lie algebroid $B\rightarrow Y$ of the form 
\begin{eqnarray}
g(t) & = & \Big(x(t),\Phi\big(x(t),\xi(t)\big)\Big),\label{eq: g(t)}\\
h(t) & = & \Big(\phi(x(t)),\xi(t)\Big).\label{eq: h(t)}
\end{eqnarray}
In other words, $\gamma$ is an admissible path in the Lie algebroid
$\gamma_{(\phi,\Phi)}\rightarrow\graph\phi$ which satisfies the following
equations
\begin{eqnarray*}
\dot{x}(t) & = & \rho_{A}(x(t))\Phi\big(x(t),\xi(t)\big),\\
\phi_{*}(\dot{x}(t)) & = & \rho_{B}(\phi(x(t))\xi(t),
\end{eqnarray*}
where $\rho_{A}$ and $\rho_{B}$ are, respectively, the anchor maps
of $A$ and $B$. The immersion $\iota$ from $\Sigma(\gamma_{(\phi,\Phi)})$
into $\Sigma(A)\times\Sigma(B)$ is the groupoid morphism given explicitly
by 
\begin{equation}
\iota:[(g,h)]\rightarrow([g],[h]),\label{eq: immersion}
\end{equation}
where $[(g,h)]$ is the class of admissible paths up to homotopy in
$\gamma_{(\phi,\Phi)}$, while $([g],[h])$ is the corresponding pair
of admissible paths up to homotopy in $A$ and $B$, respectively.

\subsection{Homotopy lifting property} \label{sec:lifting}

In this section, we prove some lifting properties for admissible paths
and homotopies via complete comorphisms. Let us start by stating two
simple facts concerning complete Lie algebroid sections and comorphisms:
\begin{itemize}
\item Let $s_{t}^{A}$ and $s_{t}^{B}$ be (time-dependent) complete sections
of Lie algebroids $A\rightarrow X$ and $B\rightarrow Y$, respectively.
Then, $\tilde{s}_{t}=s_{t}^{A}\times s_{t}^{B}$ is a complete section
of the Lie algebroid product $A\times B\rightarrow X\times Y$.
\item For $i=1,2,$ let $(\phi_{i},\Phi_{i})$ be complete comorphisms from
$A_{i}\rightarrow X_{i}$ to $B_{i}\rightarrow Y_{i}$. Then $(\phi_{1}\times\phi_{2},\Phi_{1}\times\Phi_{2})$
is a complete comorphism from $A_{1}\times A_{2}\rightarrow X_{1}\times X_{2}$
to $B_{1}\times B_{2}\rightarrow Y_{1}\times Y_{2}$. 
\end{itemize}

\begin{lem}
\label{lem: lift of time-dep sections}Let $(\phi,\Phi)$ be a complete
Lie algebroid comorphism from $A\rightarrow X$ to $B\rightarrow Y$
and let $s_{t}:Y\rightarrow B$ be a complete (time-dependent) section
of $B$. Then
\[
(\Phi^{\dagger}s_{t})(x)=\Phi(x,s_{t}(\phi(x)))
\]
is a complete (time-dependent) section of $A$. 
\end{lem}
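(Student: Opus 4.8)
The plan is to reduce this time-dependent statement to the time-independent case already built into the definition of a complete comorphism, by a suspension trick: adjoin an extra ``time'' coordinate so that the time-dependent section $s_t$ becomes a genuine (time-independent) section of a product Lie algebroid, and then invoke the two compatibility facts recorded just above the lemma (products of complete sections are complete, and products of complete comorphisms are complete).

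Concretely, I would let $T\R\to\R$ be the tangent Lie algebroid of the real line (with anchor the identity), and observe that the identity pair $(\id,\id)$ is a complete comorphism from $T\R$ to itself, since its pullback map $\Phi^{\dagger}$ on sections is the identity and hence preserves completeness. By the second bullet fact, the product $(\phi\times\id,\Phi\times\id)$ is then a complete comorphism from $A\times T\R\to X\times\R$ to $B\times T\R\to Y\times\R$. Next I would form the suspension of $s_t$, namely the section $\hat{s}$ of $B\times T\R$ given by $\hat{s}(y,\tau)=(s_{\tau}(y),\partial_{\tau})$. Its image under the anchor $\rho_{B}\times\id$ is the suspended vector field $\partial_{\tau}+\rho_{B}(s_{\tau})$ on $Y\times\R$, whose flow is $(y,\tau)\mapsto(\psi_{\tau,\tau+t}(y),\tau+t)$, where $\psi$ denotes the time-dependent flow of $\rho_{B}(s_{t})$. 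Thus $\hat{s}$ is complete as a time-independent section of the product exactly when $s_{t}$ is complete as a time-dependent section of $B$. Applying completeness of the product comorphism then gives that $(\Phi\times\id)^{\dagger}\hat{s}$ is a complete section of $A\times T\R$, and a direct computation using the product structure shows
\[
(\Phi\times\id)^{\dagger}\hat{s}(x,\tau)=\big((\Phi^{\dagger}s_{\tau})(x),\partial_{\tau}\big),
\]
which is precisely the suspension of $\Phi^{\dagger}s_{t}$. Unwinding the same flow identification yields that $\Phi^{\dagger}s_{t}$ is complete, as desired.

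The main point to handle carefully is the dictionary between completeness of a time-dependent section and completeness of its suspension as a time-independent section of the product with $T\R$; once this equivalence is fixed, the argument is purely formal, resting only on the definition of a complete comorphism in the time-independent case together with the two product facts. The remaining checks are routine: that $\hat{s}$ is a genuine smooth section, which follows from the assumed smooth dependence of $s_{t}$ on $t$, and that $(\Phi\times\id)^{\dagger}$ commutes with suspension as displayed, which is immediate from the product form of the comorphism.
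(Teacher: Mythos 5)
Your proposal is correct and follows essentially the same route as the paper: the paper also suspends $s_t$ to the time-independent section $\tilde{s}(y,t)=s_t(y)+\partial_t$ of $B\times T\R$, uses the product fact to get completeness of $(\phi\times\id,\Phi\times\id)$, and unwinds the flow of $(\Phi\times\id)^{\dagger}\tilde{s}$ on $X\times\R$ to conclude. Your extra care with the dictionary between completeness of a time-dependent section and of its suspension only makes explicit what the paper leaves implicit.
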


\begin{proof}
To remove the time-dependency, we can lift $s_{t}$ to the Lie Algebroid
$B\times T\R\rightarrow Y\times\R$ by considering the section
\[
\tilde{s}(y,t)=s_{t}(y)+\partial_{t},
\]
which remains complete but which is now time-independent. The product
$\tilde{\Phi}=(\phi\times\id_{\R},\Phi\times\id_{T\R})$ is a complete
comorphism from $A\times T\R$ to $B\times T\R$, since both factors
are complete comorphisms. Thus, the lift
\[
(\tilde{\Phi}^{\dagger}\tilde{s})(x,t)=(\Phi^{\dagger}s_{t})(x)+\partial_{t}
\]
is a complete section of $A\times T\R$, and the induced flow on $X\times\R$
\[
\tilde{\Psi}_{t}(x,t)=(\Psi_{t}(x),t)
\]
exists for all $x\in X$ and all times $t\in\R$. Since $\Psi_{t}$
is the flow generated by the section $\Phi^{\dagger}s_{t}$, this
implies that $\Phi^{\dagger}s_{t}$ is complete. 
\end{proof}

\begin{prop}
(Path lifting.) \label{prop: hom. lifting prop. I}Let $(\phi,\Phi)$
be a complete comorphism from the Lie algebroid $A\rightarrow X$
to the Lie algebroid $B\rightarrow Y$, and let \textup{$g(t)=(y(t),\xi(t))$
be an admissible path in }$\mathcal{P}(B)$. Then, through any point
$x\in\phi^{-1}(y(0))$, there exists a smooth curve $x(t)$ starting
at $x$, which projects onto $y(t)$ via $\phi$, and such that
\[
\tilde{g}(t):=\big(x(t),\;\Phi\big(x(t),\xi(t)\big)\big)
\]
is an admissible path in $\mathcal{P}(A)$. 
\end{prop}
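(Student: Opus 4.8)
The plan is to realize the given admissible path $g(t)=(y(t),\xi(t))$ as the flow of a time-dependent section of $B$, to transport that section to $A$ via $\Phi^{\dagger}$, and then to take the anchor flow of the transported section as the base component $x(t)$ of the lift. First I would extend the ``vector part'' $\xi(t)$ to a time-dependent section $s_{t}:Y\to B$ with $s_{t}(y(t))=\xi(t)$ for all $t\in[0,1]$. Since the graph $\{(t,y(t))\}\subset[0,1]\times Y$ is an embedded curve (the parameter $t$ separates points even where $y$ self-intersects), one extends $\xi(t)$ to a smooth time-dependent section on a neighborhood and then multiplies by a bump function so that $s_{t}$ has compact support, uniformly in $t$. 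A compactly supported section has complete anchor vector field, so $s_{t}$ is complete; and by the normalization $s_{t}(y(t))=\xi(t)$ the base curve $y(t)$ is an integral curve of the complete time-dependent vector field $\rho_{B}(s_{t})$, because $\rho_{B}(y(t))s_{t}(y(t))=\rho_{B}(y(t))\xi(t)=\dot{y}(t)$.

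Next I would invoke Lemma~\ref{lem: lift of time-dep sections}: since $(\phi,\Phi)$ is complete and $s_{t}$ is complete, the pullback $\Phi^{\dagger}s_{t}$ is a complete time-dependent section of $A$, so its anchor $\rho_{A}(\Phi^{\dagger}s_{t})$ is a complete time-dependent vector field on $X$ with flow $\Psi_{t}$ defined for all $t$. Setting $x(t):=\Psi_{t}(x)$ for the chosen $x\in\phi^{-1}(y(0))$ then yields a smooth curve, starting at $x$ and defined on all of $[0,1]$, which by construction satisfies
\[
\dot{x}(t)=\rho_{A}(x(t))\,\Phi\big(x(t),s_{t}(\phi(x(t)))\big).
\]

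It remains to check that $x(t)$ projects onto $y(t)$, for then $s_{t}(\phi(x(t)))=s_{t}(y(t))=\xi(t)$ and the displayed equation becomes exactly the admissibility condition $\dot{x}(t)=\rho_{A}(x(t))\Phi(x(t),\xi(t))$ required of $\tilde{g}$. Writing $u(t):=\phi(x(t))$ and applying $\phi_{*}$ to the displayed equation, the anchor compatibility \eqref{eq:anchor}, $\phi_{*}\circ\rho_{A}\circ\Phi=\rho_{B}$, gives $\dot{u}(t)=\rho_{B}(u(t))\,s_{t}(u(t))$, so that $u(t)$ is an integral curve of $\rho_{B}(s_{t})$. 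Since $u(0)=\phi(x)=y(0)$ and $y(t)$ is an integral curve of the same vector field through the same point, uniqueness of solutions of ordinary differential equations forces $u(t)=y(t)$ on $[0,1]$, which completes the verification.

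The one place where the completeness hypothesis is essential, and where the argument would fail for a general comorphism, is the passage from $s_{t}$ to $\Phi^{\dagger}s_{t}$ through Lemma~\ref{lem: lift of time-dep sections}: completeness of the comorphism is precisely what guarantees that the lifted vector field on $X$ is complete, and hence that its flow, together with the lifted path $x(t)$, survives for the entire parameter interval rather than escaping to infinity (or running off the domain of $\Phi$) in finite time. This is the ``Serre fibration''-type feature of complete comorphisms; everything else is the standard dictionary between admissible paths and flows of time-dependent sections, combined with uniqueness of integral curves.
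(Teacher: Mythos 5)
Your proof is correct and follows essentially the same route as the paper's: extend $\xi(t)$ to a compactly supported (hence complete) time-dependent section $s_t$ of $B$ whose anchor flow reproduces $y(t)$, apply Lemma \ref{lem: lift of time-dep sections} to conclude that $\Phi^{\dagger}s_t$ is complete, and take $x(t)$ to be the integral curve of $\rho_A\Phi^{\dagger}s_t$ through the chosen point $x$. The only differences are cosmetic: the paper performs the extension piecewise in trivializing charts (via $s_t(y)=\chi(y)\tilde{\xi}(t)$) rather than globally along the embedded graph in $[0,1]\times Y$, and it leaves implicit the verification, which you spell out via the anchor compatibility \eqref{eq:anchor} and uniqueness of integral curves, that $\phi(x(t))=y(t)$.
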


\begin{proof}
It is enough to show that we can lift the admissible path $g$ piecewise
in coordinate patches. In a local chart, we can regard the base component
$y(t)$ of a admissible path $g(t)=(y(t),\xi(t))$ as being the integral
curve of a time-dependent vector field; namely, 
\begin{equation}
X_{t}(y)=\rho_{B}(y)(\chi(y)\tilde{\xi}(t)),\label{eq:aaa}
\end{equation}
where we consider $s_{t}(y):=\chi(y)\tilde{\xi}(t)$ to be a local
(time-dependent) section of $B$. In \eqref{eq:aaa}, $\chi$ is a
cutoff function that vanishes outside a compact containing the image
of the curve $y(t)$ and that is equal to $1$ on a smaller compact
containing it, and $\tilde{\xi}$ is a smooth extension of $\xi$
to $\R$ that coincides with $\xi$ on $[0,1]$. 

The idea is to pullback \eqref{eq:aaa} to a vector field on $X$
and to obtain the lift of our admissible path as an integral curve
of this new vector field. 

Because of the cutoff function, $X_{t}$ is compactly supported and
thus complete. By Lemma \ref{lem: lift of time-dep sections}, we
obtain that $\Phi^{\dagger}s_{t}$ is complete. Thus the integral
curve $x(t)$ of $\rho_{A}\Phi^{\dagger}s_{t}$ starting at the point
\[
x(0)\in\phi^{-1}(y(0)),
\]
exists for all $t$, and, in particular, for all $t\in[0,1]$. On
this interval, we have that
\begin{eqnarray*}
\dot{x}(t) & = & \rho_{A}(x(t))\Phi\Big(x(t),\,\xi(t)\Big),
\end{eqnarray*}
which shows that 
\[
\Big(x(t),\,\Phi\big(x(t),\,\xi(t)\big)\Big)
\]
is an admissible path that lifts the one we started with. 
\end{proof}
Now we can apply Proposition \ref{prop: hom. lifting prop. I} to
homotopies
\[
g(t,s)=(y(t,s),\xi(t,s))
\]
between admissible paths in $\mathcal{P}(B)$. By definition of homotopy,
the path $g_{s}:t\mapsto g(t,s)$ is an admissible path in $\mathcal{P}(B)$
for each fixed value $s\in[0,1]$ of the homotopy parameter. Then,
given a complete comorphism $(\phi,\Phi)$ from $A$ to $B$ and a
starting point $x\in\phi^{-1}(y(0,0))$, Proposition \ref{prop: hom. lifting prop. I}
gives us a family of admissible paths,
\begin{equation}
\tilde{g}_{s}(t)=\Big(x(t,s),\,\Phi\big(x(t,s),\,\xi(t,s)\big)\Big),\label{eq:hom. lift}
\end{equation}
in $\mathcal{P}(A)$ indexed by $s\in[0,1]$, and such that $\phi(x(t,s))=y(t,s)$
for all $t$ and $s$. 
\begin{prop}
\label{prop: hom. lifting. prop. II}(Homotopy lifting.) The family
$\tilde{g}(t,s):=\tilde{g}_{s}(t)$ as above is a homotopy between
admissible paths in $\mathcal{P}(A)$. \end{prop}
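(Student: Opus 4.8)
The plan is to produce, for the lifted family $\tilde g(t,s)=\big(x(t,s),\Phi(x(t,s),\xi(t,s))\big)$, an explicit homotopy section of $A$ and then to check that it satisfies the defining homotopy equations \eqref{eq: x-comp}--\eqref{eq: p-comp}. Since $g(t,s)$ is a homotopy in $\mathcal P(B)$, there is a section $\beta_B(t,s)$ of $B$ along $y(t,s)$, vanishing for $t=0,1$, that satisfies the $B$-versions of \eqref{eq: x-comp}--\eqref{eq: p-comp}. The natural candidate upstairs is $\beta_A(t,s):=\Phi\big(x(t,s),\beta_B(t,s)\big)$, the fibrewise image of $\beta_B$ under the comorphism. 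Because $\Phi$ is linear on fibres and $\beta_B$ vanishes at $t=0,1$, so does $\beta_A$, which is exactly the condition that endpoints be fixed. Note also that, as the $B$-homotopy fixes endpoints, $y(0,s)$ is independent of $s$ (indeed $\partial_s y(0,s)=\rho_B(y)\beta_B(0,s)=0$), so $\phi^{-1}(y(0,s))$ is constant and we may take all the lifts of Proposition \ref{prop: hom. lifting prop. I} to start at a single point $x$, giving $x(0,s)\equiv x$.

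The first homotopy equation to verify is $\partial_s x=\rho_A(x)\beta_A$. Applying $\phi_*$ and using $\phi(x(t,s))=y(t,s)$, the anchor compatibility \eqref{eq:anchor}, and the $B$-version of \eqref{eq: x-comp}, one checks that both sides have the same image $\rho_B(y)\beta_B=\partial_s y$ under $\phi_*$; but $\phi$ is only a submersion, so this pins down $\partial_s x$ only modulo $\ker\phi_*$ and equality does not follow formally. I would obtain it instead by a uniqueness argument in the $t$-direction. For fixed $s$, the base curve $x(\cdot,s)$ is by construction the integral curve of the time-dependent vector field $F(t,s,\,\cdot\,):=\rho_A(\,\cdot\,)\,\Phi\big(\,\cdot\,,\xi(t,s)\big)$, so $v(t,s):=\partial_s x(t,s)$ satisfies the variational equation $\partial_t v=D_xF\cdot v+\partial_s F$, a linear inhomogeneous ODE in $t$ with $v(0,s)=0$. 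I would then show that $w(t,s):=\rho_A(x(t,s))\beta_A(t,s)$ satisfies the same equation with $w(0,s)=0$ (using $\beta_B(0,s)=0$), so that uniqueness of solutions forces $v=w$. Verifying that $w$ solves the variational equation is where the comorphism axioms enter: after substituting $\partial_s\xi$ from the $B$-version of \eqref{eq: p-comp}, the required identity reduces to \eqref{eq:anchor} together with the structure-function relations expressing that $\Phi^{\dagger}$ preserves Lie brackets.

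The second homotopy equation for $A$, written in a trivializing frame with $\eta:=\Phi(x,\xi)$ the fibre component of $\tilde g$, reads $\partial_s\eta^c=\partial_t\beta_A^c+f^c_{ab}(x)\beta_A^a\eta^b$. I would verify it by a direct computation: expressing $\eta$ and $\beta_A=\Phi(x,\beta_B)$ through the comorphism matrix and the frames, differentiating in $t$ and $s$, and substituting $\partial_s\xi$ from the $B$-version of \eqref{eq: p-comp} together with $\partial_s x$ from the first equation just established. The claim again reduces to the same bracket-preservation identity for $\Phi^{\dagger}$; concretely, it is the pullback via $\Phi$ of the $B$-version of \eqref{eq: p-comp}. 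With both equations in hand, $\beta_A$ exhibits $\tilde g(t,s)$ as a homotopy of admissible paths in $\mathcal P(A)$.

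The main obstacle is the first equation: the anchor identity alone determines $\partial_s x$ only up to the vertical kernel of $\phi_*$, and it is the interaction of the lifting ODE with the bracket-preservation property of $\Phi^{\dagger}$ -- not the anchor by itself -- that excludes a spurious vertical discrepancy. One may package the whole verification conceptually by observing that $\big(\tilde g(t,s),g(t,s)\big)$ is a family of admissible paths in the subalgebroid $\gamma_{(\phi,\Phi)}\subset A\times B$, that $(\beta_A,\beta_B)$ is the corresponding homotopy section there, and that the projection $A\times B\to A$ is a Lie algebroid morphism and hence carries homotopies to homotopies; however, establishing that $(\tilde g,g)$ is a homotopy in the product still comes down to the same first equation, so the uniqueness argument above remains the essential step.
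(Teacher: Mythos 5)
Your proposal is correct and follows essentially the same route as the paper: both take $\beta_A=\Phi^{\dagger}\beta_B$ as the lifted homotopy section, isolate the first homotopy equation as the genuine difficulty (since the anchor identity pins down $\partial_s x$ only modulo $\ker\phi_*$), and resolve it using the bracket-preservation of $\Phi^{\dagger}$ together with the initial conditions $x(0,s)\equiv x$ and $\beta_B(0,s)=0$, with the second equation then following by direct computation. The paper packages this as the statement that the commuting sections $s_\xi=\xi+\partial_t$ and $s_\beta=\beta+\partial_s$ of $B\times TI\times TJ$ lift to commuting sections of $A\times TI\times TJ$ whose anchored flows therefore commute; your uniqueness argument for the variational equation is precisely the unwound proof of that commuting-flows step.
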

\begin{proof}
Consider the Lie algebroid product $\tilde{A}:=A\times TI\times TJ$
(resp. $\tilde{B}:=B\times TI\times TJ$). We denote by $t$ the variable
in $I=[0,1]$ and by $s$ the variable in $J=[0,1]$. We introduce
the following local section of $\tilde{B}$:
\begin{eqnarray*}
s_{\xi}(y,t,s) & = & \xi(t,s)+\partial_{t},\\
s_{\beta}(y,t,s) & = & \beta(t,s)+\partial_{s},
\end{eqnarray*}
where $\xi(t,s)$ is the fiber component of the homotopy $g(t,s)$
and where $\beta(t,s)$ is the local expression of the associated
section $\beta_{t}$, restricted to $y(t,s)$. A straightforward computation
tells us that $[s_{\xi},s_{\beta}]_{\tilde{A}}=0$ since $g$ is a
homotopy. Moreover, for each fixed $s$ the curve $t\mapsto(y(t,s),t,s)$
is an integral curve of $\tilde{\rho}_{B}s_{\xi}$, while, for each
fixed $t$, the map $s\mapsto(y(t,s),t,s)$ is an integral curve of
$\tilde{\rho}_{B}s_{\beta}$, where $y(t,s)$ is the base component
of the homotopy $g(t,s)$. 

We can now lift the local sections $s_{\xi}$ and $s_{\beta}$ of
$\tilde{B}$ to local sections $\tilde{\Phi}^{\dagger}s_{\xi}$ and
$\tilde{\Phi}^{\dagger}s_{\beta}$ of $\tilde{A}$ via the comorphism
$(\tilde{\phi},\tilde{\Phi})$ from $\tilde{A}$ to $\tilde{B}$ defined
by
\[
\tilde{\phi}=\phi\times\id_{I}\times\id_{J},\quad\tilde{\Phi}=\Phi\times\id_{TI}\times\id_{TJ}.
\]
Because $[s_{\xi},s_{\beta}]_{\tilde{A}}=0$ and because $\tilde{\Phi}^{\dagger}$
and $\tilde{\rho}_{A}$ are Lie morphisms, we obtain that 
\begin{equation}
[\tilde{\Phi}^{\dagger}s_{\xi},\tilde{\Phi}^{\dagger}s_{\beta}]=0\quad\textrm{and}\quad[\tilde{\rho}_{A}\tilde{\Phi}^{\dagger}s_{\xi},\tilde{\rho}_{A}\tilde{\Phi}^{\dagger}s_{\beta}]=0.\label{eq: vanishing}
\end{equation}
Now consider the family of curves
\[
\gamma(t,s):=\Big(x(t,s),\, t,\, s\Big),
\]
where $x(t,s)$ is the base component of the lift $\tilde{g}(t,s)$
in \eqref{eq:hom. lift}. A straightforward computation shows that
the curve $\gamma(\cdot,s):t\mapsto\gamma(t,s)$ is an integral curve
of $\tilde{\rho}_{A}\tilde{\Phi}^{\dagger}s_{\xi}$ for each $s\in[0,1]$
(namely, we obtained these curves as lifts of admissible paths in
$B$ for each $s$, and thus they are admissible paths in $A$ for
each $s$). Similarly, a direct computation gives that the curve $\gamma(0,\cdot):s\mapsto\gamma(0,s)$
is an integral curve of $\tilde{\rho}_{A}\tilde{\Phi}^{\dagger}s_{\beta}$
(this relies mostly on the fact that $\beta_{0}=0$ and that $x(0,s)=x$
is constant). Since the vector fields $\tilde{\rho}_{A}\tilde{\Phi}^{\dagger}s_{\xi}$
and $\tilde{\rho}_{A}\tilde{\Phi}^{\dagger}s_{\beta}$ commute, the
family of integral curves of $\tilde{\rho}_{A}\tilde{\Phi}^{\dagger}s_{\beta}$
starting at $\gamma(t,0)$ for $t\in[0,1]$ coincide with the family
$\gamma(t,\cdot):s\mapsto\gamma(t,s)$, which implies, in particular,
that $x(t,s)$ satisfies equation \eqref{eq: x-comp}. Now the vanishing
in \eqref{eq: vanishing} implies the second homotopy equation \eqref{eq: p-comp}
by direct computation. 
\end{proof}

\begin{cor}
\label{cor: homotopy}Let $A\rightarrow X$ and $B\rightarrow Y$
be two integrable Lie algebroids, and let $(\phi,\Phi)$ be a complete
comorphism from $A$ to $B$. For all $g\in\mathcal{P}(B)$ with source
$r_{B}(g)$ in the image of $\phi$, we have that $(\tilde{g},g)\in\mathcal{P}(\gamma_{(\phi,\Phi)})$,
where $\tilde{g}$ is a lift of $g$ through $x\in\phi^{-1}(s_{B}(g))$.
Moreover, if $h\sim g$, then $(\tilde{h},h)\sim(\tilde{g},g)$. 
\end{cor}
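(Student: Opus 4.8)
The plan is to read off both assertions almost directly from the two preceding lifting propositions, using the explicit description of $\mathcal{P}(\gamma_{(\phi,\Phi)})$ given in Section~\ref{sub: path construction}. Recall from there that an admissible path in $\gamma_{(\phi,\Phi)}$ is precisely a pair $(\tilde g, g)$ with $\tilde g(t)=(x(t),\Phi(x(t),\xi(t)))$ admissible in $A$, with $g(t)=(\phi(x(t)),\xi(t))$ admissible in $B$, and with $\phi\circ x=y$. So the content of the first claim is just that such a pair is assembled by path lifting.

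For the first assertion I would invoke Proposition~\ref{prop: hom. lifting prop. I}. Writing $g(t)=(y(t),\xi(t))$, it produces a curve $x(t)$ through the chosen point $x\in\phi^{-1}(y(0))$ with $\phi(x(t))=y(t)$ and with $\tilde g(t)=(x(t),\Phi(x(t),\xi(t)))$ admissible in $A$. The first admissibility equation for the graph, namely $\dot x(t)=\rho_A(x(t))\Phi(x(t),\xi(t))$, is then exactly the admissibility of $\tilde g$, while the second, $\phi_*(\dot x(t))=\rho_B(\phi(x(t)))\xi(t)$, follows by applying $\phi_*$ to the first and using the anchor relation \eqref{eq:anchor} (equivalently, it is just the admissibility of $g$ together with $\phi\circ x=y$). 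Hence $(\tilde g,g)\in\mathcal{P}(\gamma_{(\phi,\Phi)})$.

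For the second assertion, suppose $h\sim g$ and choose a homotopy $g(t,s)$ from $g$ to $h$ in $\mathcal{P}(B)$, with associated section $\beta$ vanishing at $t=0,1$. Lifting through $x$ and applying Proposition~\ref{prop: hom. lifting. prop. II} yields a homotopy $\tilde g(t,s)$ between admissible paths in $\mathcal{P}(A)$, whose homotopy section is $\Phi^\dagger\beta=\Phi(x(\cdot,\cdot),\beta(\cdot,\cdot))$. I would then package the $A$- and $B$-data into the single pair $(\tilde g(t,s),g(t,s))$ in the product algebroid $A\times B$, with homotopy section $(\Phi^\dagger\beta,\beta)$ and base homotopy $(x(t,s),\phi(x(t,s)))$. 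At $s=0$ and $s=1$ this recovers $(\tilde g,g)$ and $(\tilde h,h)$: since $x(0,s)=x$ is constant (as noted in the proof of Proposition~\ref{prop: hom. lifting. prop. II}) both lifts start at the same point, and uniqueness of integral curves identifies them with $\tilde g$ and $\tilde h$.

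The step that needs genuine attention, rather than being a formality, is to verify that this is a homotopy \emph{inside} the subalgebroid $\gamma_{(\phi,\Phi)}$ and not merely in the ambient product: a homotopy in $A\times B$ passing through the subalgebroid need not restrict to one, which is precisely the phenomenon responsible for the immersion $\iota$ failing to be an embedding. What saves us is that the homotopy is exhibited directly in $\gamma_{(\phi,\Phi)}$. The section $(\Phi^\dagger\beta,\beta)=(\Phi(x,\beta),\beta)$ takes values in the graph fibrewise by construction, and the base homotopy lies in $\graph\phi$; the two product homotopy equations \eqref{eq: x-comp} and \eqref{eq: p-comp} then split into their $A$-components, supplied by Proposition~\ref{prop: hom. lifting. prop. II}, and their $B$-components, supplied by the given homotopy, the base equations being consistent via \eqref{eq:anchor}. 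This confirms $(\tilde h,h)\sim(\tilde g,g)$.
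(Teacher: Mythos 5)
Your argument is correct and is exactly the intended one: the paper states this corollary without proof as an immediate consequence of Propositions \ref{prop: hom. lifting prop. I} and \ref{prop: hom. lifting. prop. II}, and your writeup supplies precisely that deduction, including the worthwhile check (via the anchor relation \eqref{eq:anchor} and the fact that the lifted section $(\Phi^{\dagger}\beta,\beta)$ takes values in the graph over $\graph\phi$) that the assembled homotopy lives in the subalgebroid $\gamma_{(\phi,\Phi)}$ and not merely in $A\times B$. The identification of the $s=1$ end with $\tilde h$ via uniqueness of solutions of the lifting ODE through $x$ is also the right way to close the argument.
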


\subsection{Analogy with Serre fibrations}

There is a certain similarity between complete comorphisms and ``Serre
fibrations'' in topology. Namely, a Serre fibration is a continuous
map $\phi:X\rightarrow Y$ between topological spaces (more precisely
CW-complexes) such that for all $n\geq0$, $f:I^{n}\rightarrow X$
and $g:I^{n}\times I\rightarrow Y$ satisfying $\phi\circ f=g\circ i_{n}$,
where $i_{n}:I^{n}\rightarrow I^{n}\times I$ is the inclusion given
by $i_{n}(\vec{t})=(\vec{t},0)$, there exists $\tilde{g}$ that makes
the following diagram commute:

\begin{diagram} 
  I^n              & \rTo^{f}            & X              \\
  \dTo^{i_n}       & \ruTo^{\tilde{g}}   & \dTo_{\phi}    \\
  I^n\times I      & \rTo_{g}            & Y
\end{diagram}
When $n=0$, $I^{0}=\{\star\}$, and we obtain the path lifting property
for $\phi$; when $n=1$, we obtain the homotopy lifting property
for homotopies between paths in $Y$. 

The analogy comes from the following facts:

\begin{itemize}
\item An admissible path in the algebroid $A\rightarrow X$ is the same
thing as a Lie algebroid morphism from $TI$ to $A$;
\item A homotopy between admissible paths is a Lie algebroid morphism from
$TI\times TI$ to $A$;
\item The tangent map to the inclusion $i_{n}$ is a Lie algebroid morphism
from $TI^{n}$ to $TI^{n}\times TI$.
\end{itemize}

With this in mind, Propositions \ref{prop: hom. lifting prop. I}
and \ref{prop: hom. lifting. prop. II} can be summarized diagrammatically
(for $n=0,1$) as follows:
\begin{diagram} 
  TI^n               & \rTo^{f}            & A              \\
  \dTo^{Ti_n}        & \ruTo^{\tilde{g}}   & \dTo_{\Phi}    \\
  TI^n\times TI      & \rTo_{g}            & B
\end{diagram}
where $\Phi$ is a complete algebroid comorphism from Lie algebroids
$A\rightarrow X$ to $B\rightarrow Y$. For $n=0$, $g$ is an admissible
path, and, for $n=1$, $g$ is a homotopy between admissible paths. 

The problem with the diagram above is that its arrows do not belong
to the same category, since it involves morphisms and comorphisms
of Lie algebroids. Going beyond a mere analogy would require a category
whose objects are the Lie algebroids and whose morphisms comprise
both morphisms and comorphisms of Lie algebroids.

\section{The integration functor \label{sec:Dazord's-Theorem}}

In \cite{da:groupoide}, Dazord announced, without proof (\cite[Thm. 4.1]{da:groupoide}),
that a complete comorphism between integrable Lie algebroids always
integrates to a unique comorphism between the integrating Lie groupoids.
We will prove this result here using the path construction, which,
together with Proposition \ref{prop: Lie}, yields an improvement
of Dazord's Theorem: namely, that a Lie algebroid comorphism is integrable
\textit{if and only if} it is complete. 

Actually, we will show that the classical integration functor for
Lie algebras generalizes to integrable Lie algebroids and complete
comorphisms:

\begin{thm}
\label{thm: Dazord}The path construction $\Sigma$ is a functor from
the category of integrable Lie algebroids and complete comorphisms
to the category of source $1$-connected Lie groupoids and comorphisms.
It is an inverse to the Lie functor $\Lie$, and, thus, implements
an equivalence between these two categories. 
\end{thm}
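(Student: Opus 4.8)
The plan is to show that $\Sigma$ is a well-defined functor on comorphisms, that it lands in honest comorphisms (not just hypercomorphisms) under the completeness hypothesis, and finally that it is a two-sided inverse to $\Lie$. The heart of the matter is the completeness hypothesis: it is precisely what upgrades the immersion $\iota:\Sigma(\gamma_{(\phi,\Phi)})\to\Sigma(A)\times\Sigma(B)$ of Section \ref{sub: path construction} from a mere hypercomorphism to a genuine groupoid comorphism, and the lifting Propositions \ref{prop: hom. lifting prop. I} and \ref{prop: hom. lifting. prop. II} are the tools that accomplish this.

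First I would construct the comorphism $\Sigma(\phi,\Phi)$ explicitly. Its core map is $\phi$ itself, viewed now as a map between the unit spaces $X$ and $Y$ of $\Sigma(A)$ and $\Sigma(B)$. The comorphism map must be a smooth map to $\Sigma(A)$ from the pullback $X\times_Y\Sigma(B)$, i.e. from pairs $(x,[g])$ with $x\in\phi^{-1}(r_B(g))$ where $g=(y(t),\xi(t))\in\mathcal P(B)$. I would define it by path lifting: send $(x,[g])$ to $[\tilde g]$, where $\tilde g(t)=\big(x(t),\Phi(x(t),\xi(t))\big)$ is the lift of $g$ through $x$ furnished by Proposition \ref{prop: hom. lifting prop. I}. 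The key well-definedness check, that the class $[\tilde g]$ depends only on $[g]$ and not on the representative $g$, is exactly Corollary \ref{cor: homotopy}: if $h\sim g$ then $\tilde h\sim\tilde g$. One then verifies the three axioms of a groupoid comorphism from Section 2.2. Preservation of units and compatibility with target maps are immediate from the path description of source and target in $\Sigma(A)$, while multiplicativity follows because the lift of a concatenation of admissible paths in $B$ is the concatenation of the lifts (the lift through $x$ of $gg'$, then continued, reassembles correctly because lifting is deterministic once the starting point is fixed). Smoothness of $\Sigma(\phi,\Phi)$ follows from smooth dependence of solutions of the ODE in Proposition \ref{prop: hom. lifting prop. I} on initial data and on the path $g$. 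Finally, that $\Sigma(\gamma_{(\phi,\Phi)})\to\Sigma(A)\times\Sigma(B)$ really is the graph of this comorphism is read off from the explicit description \eqref{eq: immersion}: a class $[(g,h)]$ in $\Sigma(\gamma_{(\phi,\Phi)})$ has precisely the form dictated by \eqref{eq: g(t)}--\eqref{eq: h(t)}, so its image is a pair $([g],[h])$ with $[g]$ the lift of $[h]$, which is the defining condition for the graph.

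Next I would establish functoriality, i.e. $\Sigma(\psi,\Psi)\circ\Sigma(\phi,\Phi)=\Sigma\big((\psi,\Psi)\circ(\phi,\Phi)\big)$ for composable complete comorphisms, together with $\Sigma$ of the identity being the identity. Both reduce to the observation that iterated path lifting equals lifting along the composite comorphism: lifting an admissible path in $C$ first to $B$ and then to $A$ yields the same homotopy class as lifting it in one step along the composite. The completeness assumption is needed here to guarantee the intermediate lifts exist globally on $[0,1]$; the two bullet points preceding Lemma \ref{lem: lift of time-dep sections} guarantee that composites of complete comorphisms are complete, so the domain category is well-defined. Then I would verify that $\Sigma$ and $\Lie$ are mutually inverse. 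On objects this is the classical statement that $\Sigma(A)$ is \emph{the} source $1$-connected integration of $A$ and $\Lie\Sigma(A)\cong A$, while $\Sigma\Lie(G)\cong G$ for source $1$-connected $G$. On morphisms, Proposition \ref{prop: Lie} already shows $\Lie$ carries groupoid comorphisms to complete algebroid comorphisms, so it remains to check the two round trips agree: $\Lie\Sigma(\phi,\Phi)=(\phi,\Phi)$ is verified by differentiating the path-lift at $t=0$ (exactly the computation in the proof of Proposition \ref{prop: Lie}, run in reverse), and $\Sigma\Lie(\phi,\Phi)=(\phi,\Phi)$ follows from source $1$-connectedness, since a comorphism of source $1$-connected groupoids is determined by its restriction to source fibres, which in turn is determined by the infinitesimal data.

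The main obstacle I expect is not any single computation but rather the passage from hypercomorphism to honest comorphism—proving that the source-fibre projection of the graph is a \emph{diffeomorphism} onto the source fibre of $\Sigma(B)$, not merely étale. Surjectivity of this projection is global path lifting (Proposition \ref{prop: hom. lifting prop. I} provides a lift of every admissible path in $B$ starting over a point in the image of $\phi$), and injectivity is the statement that two lifts landing in the same homotopy class downstairs are already homotopic upstairs, which is Proposition \ref{prop: hom. lifting. prop. II} together with Corollary \ref{cor: homotopy}. This is precisely where completeness is indispensable, and where a non-complete comorphism fails, as the example of Section \ref{sec:The-example} shows: without completeness the lift need not exist for all $t\in[0,1]$, so the source-fibre projection is only étale onto its image and the integration is a genuine hypercomorphism rather than a comorphism. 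Thus the $1$-connectedness of the source fibres of $\Sigma(B)$ (needed for injectivity via homotopy lifting) and completeness (needed for surjectivity via path lifting) are exactly the two hypotheses that make the diffeomorphism, and hence the equivalence of categories, go through.
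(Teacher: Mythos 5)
Your proposal is correct and follows essentially the same route as the paper: the lifting Propositions \ref{prop: hom. lifting prop. I} and \ref{prop: hom. lifting. prop. II} together with Corollary \ref{cor: homotopy} upgrade the hypercomorphism $\iota$ to a genuine comorphism, functoriality is checked by composing lifts, and the equivalence with $\Lie$ is handled on objects by the standard path-construction isomorphism and on morphisms by $1$-connectedness. The only differences are organizational: you build the fibrewise map $(x,[g])\mapsto[\tilde g]$ directly and get single-valuedness from well-definedness of the lift, where the paper reduces to the case of units via the groupoid algebra of $\Sigma(\gamma_{(\phi,\Phi)})$ (its Lemmas \ref{lem: injective immersion}--\ref{lem: uniq of liftings}); and your treatment of $\Sigma\circ\Lie\cong\id$ compresses into one sentence what the paper spells out as an explicit natural isomorphism $\alpha_G={\bf ev}\circ D^{-1}$ with a naturality check via comorphism graphs.
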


As corollary of Theorem \ref{thm: Dazord}, Proposition \ref{prop: Lie},
and Corollary \ref{prop:faithfulness} (for the uniqueness part),
we obtain Dazord's statement:

\begin{cor}
(Dazord \cite{da:groupoide}). Let $A\rightarrow X$ and $B\rightarrow Y$
be two integrable Lie algebroids with source $1$-connected integrating
Lie groupoids $G$ and $H$. Then a Lie algebroid comorphism from
$A$ to $B$ integrates to a (unique) Lie groupoid comorphism from
$G$ to $H$ if and only if it is complete. 
\end{cor}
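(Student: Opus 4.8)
(1) that $\Sigma$ is well-defined on complete comorphisms, i.e. that the path construction applied to the comorphism graph actually yields a \emph{genuine} Lie groupoid comorphism (not merely a hypercomorphism) from $\Sigma(A)$ to $\Sigma(B)$; (2) that $\Sigma$ is functorial, respecting composition and identities; and (3) that $\Sigma$ and $\Lie$ are mutually inverse, which together with Proposition \ref{prop: Lie} gives the claimed equivalence. The completeness hypothesis enters decisively in step (1), where it is precisely what rules out the pathologies exhibited in Section \ref{sec:The-example}.

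\medskip

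\emph{Step (1): well-definedness.} Recall from Section \ref{sub: path construction} that integrating $\gamma_{(\phi,\Phi)}$ by the path construction yields a groupoid $\Sigma(\gamma_{(\phi,\Phi)})$ together with an immersion $\iota$ into $\Sigma(A)\times\Sigma(B)$, and that this is always a hypercomorphism. To promote it to an honest comorphism I must verify the two defining properties from the subsection on Lie groupoids: that the projection of $\iota$ to $\Sigma(B)$ restricts to a \emph{diffeomorphism} (not merely an \'etale map) between source fibres, and that the image is a smooth \emph{closed embedded} subgroupoid. The key is Corollary \ref{cor: homotopy}: given a complete comorphism, every admissible path $g\in\mathcal{P}(B)$ with source in the image of $\phi$ admits, through each $x\in\phi^{-1}(s_B(g))$, a lift $\tilde g$ with $(\tilde g,g)\in\mathcal{P}(\gamma_{(\phi,\Phi)})$, and homotopic paths lift to homotopic lifts. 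This says exactly that the map on source fibres $\Sigma(\gamma_{(\phi,\Phi)})_x \to \Sigma(B)_{\phi(x)}$ is \emph{surjective} (path lifting, Proposition \ref{prop: hom. lifting prop. I}) and \emph{injective} (homotopy lifting, Proposition \ref{prop: hom. lifting. prop. II}, guaranteeing that the lift is well-defined on homotopy classes). Since the map is already \'etale, surjectivity and injectivity upgrade it to a diffeomorphism of source fibres. From this one reads off that $\iota$ is injective, and since $\phi$ is fixed the image is the graph of a fibrewise-defined map $\Phi_\Sigma$; the defining equations \eqref{eq: g(t)}--\eqref{eq: h(t)} show $\Phi_\Sigma$ sends units to units and is compatible with targets, while concatenation of lifted paths yields multiplicativity. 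Smoothness and embeddedness of the image then follow because the inverse of the source-fibre diffeomorphism furnishes a smooth local section, trivializing $\iota$.

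\medskip

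\emph{Steps (2) and (3): functoriality and inverse.} For functoriality I would check that lifting through a composite $\gamma_{(\phi,\Phi)}\circ\gamma_{(\psi,\Psi)}$ agrees with lifting successively; this is a routine consequence of the uniqueness of lifts (injectivity on source fibres) established in step (1), together with the observation that the core maps compose contravariantly in the expected way. That $\Lie\circ\Sigma=\id$ follows because the path construction integrates the algebroid $\gamma_{(\phi,\Phi)}$, so differentiating the resulting groupoid comorphism recovers $(\phi,\Phi)$ by the compatibility of $\Lie$ with subalgebroid graphs noted in the subsection on Lie groupoids; that $\Sigma\circ\Lie=\id$ uses source $1$-connectedness, since a source $1$-connected groupoid comorphism is determined by its infinitesimal data and $\Sigma$ reconstructs the canonical source $1$-connected representative. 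Proposition \ref{prop: Lie} supplies the remaining direction, namely that $\Lie$ lands among \emph{complete} comorphisms, so the two functors indeed go between the stated categories.

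\medskip

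\textbf{The hard part} will be step (1), and specifically showing that the source-fibre bijection is a \emph{diffeomorphism} and that the image is \emph{closed and embedded}, rather than merely immersed. The example of Section \ref{sec:The-example} is a cautionary tale: there the integrating subgroupoid is dense and has nontrivial self-intersections precisely \emph{because} the connection fails to be complete, so the projection on source fibres is \'etale but not bijective. The whole weight of the completeness hypothesis is therefore borne here, channelled through Corollary \ref{cor: homotopy}. Once injectivity and surjectivity on source fibres are in hand, the upgrade from immersion to closed embedding is a general fact about groupoid morphisms that are diffeomorphisms on source fibres and cover an embedded base (here the graph of $\phi$), but I would be careful to spell out why bijectivity on \emph{every} source fibre — uniformly in $x$ — is what forces the image to be closed.
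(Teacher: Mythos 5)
Your proposal follows essentially the same route as the paper: completeness is channelled through the path- and homotopy-lifting propositions to show that the integrating hypercomorphism is injective and bijective on source fibres, hence a closed embedded comorphism graph (your Step (1) is the paper's Paragraph \ref{sub:Embeddability}, including the section-over-$X\times_Y\Sigma(B)$ argument for closedness), after which functoriality and the inverse relationship with $\Lie$ are established as in Paragraphs \ref{sub:Functoriality} and \ref{sub:Equivalence}, with Proposition \ref{prop: Lie} giving the converse direction and faithfulness giving uniqueness. This matches the paper's decomposition step for step.
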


The rest of this section is devoted to the proof of Theorem \ref{thm: Dazord}.
In Paragraph \ref{sub:Embeddability}, we show that $\Sigma$ takes
a complete Lie algebroid comorphism to a Lie groupoid comorphism.
In Paragraph \ref{sub:Functoriality}, we show that $\Sigma$ is functorial,
and in Paragraph \ref{sub:Embeddability} we show that it is a homotopy
inverse to the Lie functor. 

\begin{rem}
One way to prove that a complete Lie algebroid comorphism integrates to 
a Lie groupoid comorphism would be to adapt the
corresponding proof for complete Poisson maps of Caseiro and Fernandes (Prop. 4.8 an
Prop. 4.9 in \cite{CF}) to comorphisms and to show that the resulting embedded subgroupoid
is the graph of a Lie groupoid comorphism. We give in Paragraph \ref{sub:Embeddability}
a different proof, which, however, relies also on the same kind of lifting properties as in \cite{CF}.
\end{rem}

\subsection{Embeddability\label{sub:Embeddability}}

Recall that any source $1$-connected Lie groupoid integrating a Lie
algebroid $A\rightarrow X$ is isomorphic to the Lie groupoid $\Sigma(A)$
obtained by the path construction. Therefore, in order to prove the
first part of the theorem, it is enough to show that the immersion
\begin{eqnarray*}
\iota:\Sigma(\gamma_{(\phi,\Phi)}) & \longrightarrow & \Sigma(A)\times\Sigma(B),\\
{}[(g,h)] & \mapsto & ([g],[h]),
\end{eqnarray*}
defined in \eqref{eq: immersion} (i.e., the hypercomorphism integrating
the comorphism $(\phi,\Phi)$ from $A$ to $B$) is a closed embedding
whose image is the graph of a comorphism $(\phi,\Psi)$ from $\Sigma(A)$
to $\Sigma(B)$, when $(\phi,\Phi)$ is complete. 

We start with a series of lemmas, in which $A$ and $B$ will always
be integrable, and $(\phi,\Phi)$ will always be a complete comorphism
of Lie algebroids from $A$ to $B$. 

\begin{lem}
\label{lem: injective immersion}The immersion $\iota:\Sigma(\gamma_{(\phi,\Phi)})\rightarrow\Sigma(A)\times\Sigma(B)$
defined in \eqref{eq: immersion} is injective. Moreover, we have
that
\begin{equation}
\Sigma_{\iota}(\gamma_{(\phi,\Phi)})\cap(\Sigma(A)\times Y)=\graph\phi,\label{eq: transversality}
\end{equation}
where $\Sigma_{\iota}(\gamma_{(\phi,\Phi)}):=\iota(\Sigma(\gamma_{(\phi,\Phi)}))$.
\end{lem}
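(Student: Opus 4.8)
The plan is to prove the two assertions of Lemma~\ref{lem: injective immersion} separately, using the explicit path description of $\Sigma(\gamma_{(\phi,\Phi)})$ from Section~\ref{sub: path construction} together with the lifting properties of Propositions~\ref{prop: hom. lifting prop. I} and~\ref{prop: hom. lifting. prop. II}.

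For injectivity of $\iota$, recall that an element of $\Sigma(\gamma_{(\phi,\Phi)})$ is a homotopy class $[(g,h)]$ of admissible paths of the form \eqref{eq: g(t)}--\eqref{eq: h(t)}, so that $g(t)=(x(t),\Phi(x(t),\xi(t)))$ is completely determined by the base curve $x(t)$ and by $h(t)=(\phi(x(t)),\xi(t))$. Thus I would suppose $\iota([(g_1,h_1)])=\iota([(g_2,h_2)])$, i.e. $[g_1]=[g_2]$ in $\Sigma(A)$ and $[h_1]=[h_2]$ in $\Sigma(B)$, and argue that this forces $[(g_1,h_1)]=[(g_2,h_2)]$ in $\Sigma(\gamma_{(\phi,\Phi)})$. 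The key point is that a homotopy $g(t,s)$ in $A$ realizing $[g_1]=[g_2]$ has a base component $x(t,s)$ projecting under $\phi$ to a homotopy $y(t,s)=\phi(x(t,s))$ in $Y$; since $h_i(t)=(\phi(x_i(t)),\xi_i(t))$, the equality $[h_1]=[h_2]$ must be compatible with this projected homotopy. I would use Proposition~\ref{prop: hom. lifting. prop. II}: a homotopy in $B$ between $h_1$ and $h_2$ lifts, through the common starting point $x_1(0)=x_2(0)\in\phi^{-1}(y(0,0))$, to a homotopy in $A$ between $\tilde h_1=g_1$ and $\tilde h_2$, and by uniqueness of integral curves this lifted homotopy is a homotopy of pairs in $\gamma_{(\phi,\Phi)}$. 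Matching the two descriptions then yields a single homotopy class in $\Sigma(\gamma_{(\phi,\Phi)})$, giving injectivity.

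For the transversality formula \eqref{eq: transversality}, I would compute the intersection of the image $\Sigma_\iota(\gamma_{(\phi,\Phi)})$ with the set of pairs whose second component is a unit of $\Sigma(B)$ (identified with $Y$). An element of the image meeting $\Sigma(A)\times Y$ is a class $([g],[h])$ with $[h]$ a unit, i.e. $h$ homotopic to a constant path. By Corollary~\ref{cor: homotopy}, the relation $h\sim(\text{constant})$ lifts to $(g,h)\sim(\tilde{\text{const}},\text{const})$; the lift of a constant admissible path through $x\in\phi^{-1}(y(0))$ is itself constant (its base curve satisfies $\dot x=\rho_A\Phi(x,0)=0$), so $[g]$ is a unit in $\Sigma(A)$ as well, namely the unit at $x$. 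The surviving data is exactly the pair $(x,\phi(x))$, which is a point of $\graph\phi$; conversely every point of $\graph\phi$ arises this way from a constant path. This establishes both inclusions.

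The main obstacle I anticipate is the injectivity argument, specifically making rigorous the claim that a homotopy in $B$ together with the hypothesis $[g_1]=[g_2]$ assembles into a genuine homotopy in the product algebroid $\gamma_{(\phi,\Phi)}$ rather than merely separate homotopies in the two factors. The delicate step is that Proposition~\ref{prop: hom. lifting. prop. II} produces a lift of a chosen homotopy in $B$, and I must verify that the lift of the homotopy witnessing $[h_1]=[h_2]$ actually coincides, up to homotopy, with a homotopy witnessing $[g_1]=[g_2]$ in the first factor; this is where uniqueness of the lift (which follows from uniqueness of integral curves of $\tilde\rho_A\tilde\Phi^\dagger s_\xi$, as used in the proof of Proposition~\ref{prop: hom. lifting. prop. II}) does the essential work. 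Once this compatibility is in hand, the rest is a matter of unwinding the explicit path descriptions.
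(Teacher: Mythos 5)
Your proof is correct and runs on the same engine as the paper's: Proposition~\ref{prop: hom. lifting. prop. II} plus uniqueness of path lifts as integral curves, and your treatment of \eqref{eq: transversality} by lifting a null-homotopy of the $B$-component and observing that the lift of a constant path is constant is exactly the paper's argument. The one organizational difference is the injectivity step: the paper exploits that $\iota$ is a groupoid morphism to reduce injectivity to the statement that preimages of units are units, and then the single null-homotopy lift proves both assertions at once; you instead lift a homotopy between $h_1$ and $h_2$ directly. Your version works, and in fact proves slightly more --- you only use $[h_1]=[h_2]$ together with $x_1(0)=x_2(0)$, never $[g_1]=[g_2]$, which is essentially the content of the paper's Lemma~\ref{lem: graphs}. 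One clarification on the step you flag as delicate: there is no need to reconcile your lifted homotopy with any homotopy witnessing $[g_1]=[g_2]$; you only need that its endpoint at $s=1$ equals $g_2$ on the nose, and this is immediate because both $\tilde h_2$ and $g_2$ are of the form \eqref{eq: g(t)} and hence solve the same ODE $\dot x(t)=\rho_{A}(x(t))\Phi(x(t),\xi_2(t))$ with the same initial condition $x_2(0)$, so they coincide as paths, not merely up to homotopy.
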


\begin{proof}
Since $\iota$ is also a groupoid morphism, we only need, in order
to show injectivity, to check that if $\iota([\gamma])$ is a groupoid
unit, then so is $[\gamma]$. For this, we start with a representative
of $[\gamma]$, say,
\[
\gamma:t\mapsto\Big(\big(x(t),\,\Phi(x(t),\xi(t))\big),\,\big(\phi(x(t)),\,\xi(t)\big)\Big).
\]
Let us denote by $\gamma_{A}$ (reps. $\gamma_{B}$) the component
of $\gamma$ in $\mathcal{P}(A)$ (resp. $\mathcal{P}(B)$). Because
$\iota$ sends $[\gamma]$ to a unit, there is a homotopy
\[
\gamma_{B}\quad\overset{s=0}{\longleftarrow}\quad g(t,s):=(y(t,s),\,\xi(t,s))\quad\overset{s=1}{\longrightarrow}\quad(y,0)
\]
between $\gamma_{B}$ and a constant path. Using Proposition \ref{prop: hom. lifting. prop. II},
we can lift $g$ to a homotopy of the form
\[
\tilde{g}(t,s):=\big(x(t,s),\,\Phi(x(t),\xi(t,s))\big)\quad\overset{s=0}{\longrightarrow}\quad\gamma_{A}
\]
among the admissible paths in $\mathcal{P}(A)$, such that $\phi(x(t,s))=y(t,s)$.
This homotopy relates $\gamma_{A}$ to a constant path $(0,x)$, for
we have that $\xi(t,1)=0$ for all $t\in[0,1]$. Putting the homotopies
for the two components of $\iota([\gamma])$ together, we obtain a
homotopy
\[
\Big(\big(x(t,s),\,\Phi(x(t),\xi(t,s))\big),\,\big(\phi(x(t,s)),\,\xi(t,s)\big)\Big)
\]
in $\mathcal{P}(\gamma_{(\phi,\Phi)})$ (see Corollary \ref{cor: homotopy})
between $[\gamma]$ and a unit in $\Sigma(\gamma_{(\phi,\Phi)})\rightrightarrows\graph\phi$. 

The very same argument also proves the second part of the statement,
since we only needed $[\gamma_{B}]$ to be a unit in $\Sigma(B)\rightrightarrows Y$
in order to prove that $[(\gamma_{A},\gamma_{B})]$ is a unit in $\Sigma(\gamma_{(\phi,\Phi)})\rightrightarrows\graph\phi$. \end{proof}

\begin{lem}
\label{lem: graphs}The intersection of $\Sigma_{\iota}(\gamma_{(\phi,\Phi)})$
with $r_{A}^{-1}(x)\times r_{B}^{-1}(\phi(x))$ is the graph of a
map $\psi_{x}:r_{B}^{-1}(\phi(x))\rightarrow r_{A}^{-1}(x)$ for all
$x\in A$. 
\end{lem}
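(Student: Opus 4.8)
The plan is to reduce the statement to a bijectivity claim for a single projection. A subset $S\subseteq r_A^{-1}(x)\times r_B^{-1}(\phi(x))$ is the graph of a map $\psi_x:r_B^{-1}(\phi(x))\to r_A^{-1}(x)$ precisely when the second projection $\pi_B:S\to r_B^{-1}(\phi(x))$, $([g],[h])\mapsto[h]$, is a bijection; the map $\psi_x$ then sends each $[h]$ to the unique $[g]$ lying over it. So I would apply this with $S=\Sigma_\iota(\gamma_{(\phi,\Phi)})\cap\big(r_A^{-1}(x)\times r_B^{-1}(\phi(x))\big)$ and prove that $\pi_B|_S$ is onto (existence of a lift) and one-to-one (uniqueness of the lift).

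First I would dispatch surjectivity using the lifting machinery already in hand. Given $[h]\in r_B^{-1}(\phi(x))$, choose a representative $h\in\mathcal{P}(B)$, whose base component starts at $\phi(x)$, so that $x\in\phi^{-1}(s_B(h))$. Path lifting (Proposition \ref{prop: hom. lifting prop. I}) produces a lift $\tilde h$ through $x$, and Corollary \ref{cor: homotopy} guarantees that $(\tilde h,h)\in\mathcal{P}(\gamma_{(\phi,\Phi)})$. By the explicit form \eqref{eq: immersion} of the immersion, $\iota([(\tilde h,h)])=([\tilde h],[h])$, and since $s([\tilde h])=x$ this pair lies in $S$ and projects to $[h]$. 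Thus $\pi_B|_S$ is surjective.

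The hard part is injectivity, and this is where I would exploit the transversality relation \eqref{eq: transversality} from Lemma \ref{lem: injective immersion}. Suppose $([g_1],[h])$ and $([g_2],[h])$ both lie in $S$. Each is of the form $\iota([\gamma_i])$ for some $[\gamma_i]\in\Sigma(\gamma_{(\phi,\Phi)})$, and because $\iota$ is a source-preserving groupoid morphism both $[\gamma_i]$ have the common source $(x,\phi(x))\in\graph\phi$; hence the product $[\gamma_2]^{-1}[\gamma_1]$ is defined. Applying the morphism $\iota$ componentwise yields $\iota([\gamma_2]^{-1}[\gamma_1])=\big([g_2]^{-1}[g_1],\,[h]^{-1}[h]\big)$, whose second component $[h]^{-1}[h]$ is a unit of $\Sigma(B)$. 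Therefore this image lies in $\Sigma(A)\times Y$, and \eqref{eq: transversality} forces it into $\graph\phi$, making it a groupoid unit; in particular its $A$-component $[g_2]^{-1}[g_1]$ is a unit of $\Sigma(A)$, which gives $[g_1]=[g_2]$ by cancellation. This proves $\pi_B|_S$ is injective, so $\psi_x$ is well defined and $S$ is exactly its graph. The only delicate point I anticipate is the bookkeeping with the source/target and composability conventions, so that $[\gamma_2]^{-1}[\gamma_1]$ is genuinely defined and its $B$-component collapses to a unit; once that is arranged, \eqref{eq: transversality} does the essential work.
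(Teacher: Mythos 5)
Your proof is correct and follows essentially the same route as the paper: the paper's argument for this lemma is exactly your injectivity step (forming a quotient element whose $\Sigma(B)$-component is a unit and invoking the relation \eqref{eq: transversality} from Lemma \ref{lem: injective immersion}), with your product $[\gamma_2]^{-1}[\gamma_1]$ being the always-composable version of the paper's $[(g'g^{-1},hh^{-1})]$, whose composability the paper does not pause to justify. The surjectivity half, which you spell out via Proposition \ref{prop: hom. lifting prop. I} and Corollary \ref{cor: homotopy}, is left implicit in the paper but is obtained exactly as you do it.
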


\begin{proof}
Suppose that we have two elements
\[
([g],[h]),\,([g'],[h])\in\Sigma_{\iota}(\gamma_{(\phi,\Phi)})\cap\big(r_{A}^{-1}(x)\times r_{B}^{-1}(\phi(x))\big).
\]
Consider the corresponding preimages $[(g,h)]$ and $[(g',h)]$ in
$\Sigma(\gamma_{(\phi,\Phi)})$ (which are unique, since $\iota$
is injective). Because $\Sigma(\gamma_{(\phi,\Phi)})$ is a groupoid,
the inverse $[(g^{-1},h^{-1})]$ of $[(g,h)]$ is in $\Sigma(\gamma_{(\phi,\Phi)})$,
and so is the product $[(g'g^{-1},hh^{-1})]$. Using Lemma \ref{lem: injective immersion}
with
\[
\iota([g'g^{-1},hh^{-1}])=([g'g^{-1}],[hh^{-1}])\in\Sigma_{\iota}(\gamma_{(\phi,\Phi)})\cap\big(\Sigma(A)\times Y\big),
\]
we obtain that $[g'g^{-1}]$ is a unit in $\Sigma(A)$, and thus $[g']=[g]=:\psi_{x}([h])$. 
\end{proof}

\begin{lem}
\label{lem: uniq of liftings}Let $g\in\mathcal{P}(B)$ be an admissible
path, and suppose that we have two lifts $\tilde{g}_{1},\tilde{g}_{2}\in\mathcal{P}(A)$
of $g$, as in Proposition \ref{prop: hom. lifting prop. I}, such
that $r_{A}([\tilde{g}_{1}])=r_{A}([\tilde{g}_{2}])$ (i.e., they
are lifts of $g$ through the same point $x\in X$, which is the source
of both paths $\tilde{g}_{1}$ and $\tilde{g}_{2}$). Then $[\tilde{g}_{1}]=[\tilde{g}_{2}]$. 
\end{lem}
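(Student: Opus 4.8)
The plan is to prove the stronger statement that $\tilde{g}_1=\tilde{g}_2$ \emph{as paths}, from which $[\tilde{g}_1]=[\tilde{g}_2]$ is immediate; this reduces the lemma to uniqueness of solutions of an ordinary differential equation. The obstruction to applying ODE theory directly is that $\Phi(\,\cdot\,,\xi(t))$ is not a vector field on $X$: it is defined only over the moving fibre $\phi^{-1}(y(t))$. The heart of the argument is therefore to promote it to a genuine time-dependent vector field on an open set of $X$ that depends only on the common path $g$.

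First I would recall from the proof of Proposition \ref{prop: hom. lifting prop. I} the local section of $B$ attached to $g(t)=(y(t),\xi(t))$. Working in a coordinate patch with trivializing sections $e_a$ of $B$ and writing $\xi(t)=\sum_a\xi^a(t)e_a(y(t))$, set
\[
s_t(y)=\chi(y)\sum_a\tilde{\xi}^a(t)\,e_a(y),
\]
where $\chi$ is a cutoff equal to $1$ near the image of $y$ and $\tilde{\xi}$ extends $\xi$ to $\R$. Then $s_t$ is a bona fide time-dependent section of $B$ with $s_t(y(t))=\xi(t)$ on $[0,1]$, its pullback $\Phi^{\dagger}s_t$ is a section of $A$ over the whole patch, and $\rho_A\Phi^{\dagger}s_t$ is an honest time-dependent vector field there. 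The crucial point is that $s_t$ is built from $g$ alone, not from either lift.

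Next I would observe that every lift, by definition, satisfies $\phi(x_i(t))=y(t)$, so that $(\Phi^{\dagger}s_t)(x_i(t))=\Phi(x_i(t),s_t(y(t)))=\Phi(x_i(t),\xi(t))$. Admissibility of $\tilde{g}_i$ then reads $\dot{x}_i(t)=\rho_A(x_i(t))\,(\Phi^{\dagger}s_t)(x_i(t))$; that is, both base curves $x_1(t)$ and $x_2(t)$ are integral curves of the \emph{same} vector field $\rho_A\Phi^{\dagger}s_t$ through the \emph{same} initial point $x$, the latter because $r_A([\tilde{g}_1])=r_A([\tilde{g}_2])$ forces $x_1(0)=x_2(0)=x$.

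Finally, uniqueness of integral curves of a smooth (hence locally Lipschitz) time-dependent vector field gives $x_1(t)=x_2(t)$ on the subinterval where $y(t)$ remains in the chosen patch. Covering $[0,1]$ by finitely many such subintervals and propagating the equality across patch boundaries—the curves agree at each breakpoint by the previous step—yields $x_1\equiv x_2$ on $[0,1]$, whence $\tilde{g}_1=\tilde{g}_2$ and in particular $[\tilde{g}_1]=[\tilde{g}_2]$. The only delicate point is the one already isolated: arranging the extension $s_t$ so that it depends on $g$ and not on the individual lift, so that standard ODE uniqueness applies to both lifts at once; the patching is then routine. Note that completeness is not needed here—it guarantees existence of the lifts over all of $[0,1]$ in Proposition \ref{prop: hom. lifting prop. I}, whereas uniqueness is a purely local matter.
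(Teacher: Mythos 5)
Your proof is correct, but it takes a genuinely different route from the paper's. The paper argues by contradiction at the level of homotopy classes: if $[\tilde{g}_1]\neq[\tilde{g}_2]$, then $([\tilde{g}_1],[g])$ and $([\tilde{g}_2],[g])$ would be two distinct points of $\Sigma_{\iota}(\gamma_{(\phi,\Phi)})$ lying in the same fibre $r_A^{-1}(x)\times r_B^{-1}(\phi(x))$ with the same second component, contradicting the graph property of Lemma \ref{lem: graphs} (the paper's citation of Lemma \ref{lem: uniq of liftings} itself at that point is evidently a typo for Lemma \ref{lem: graphs}); this leans on the groupoid structure of $\Sigma(\gamma_{(\phi,\Phi)})$ and on the injectivity of $\iota$ established in Lemma \ref{lem: injective immersion}. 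You instead prove the stronger, on-the-nose statement that $\tilde{g}_1=\tilde{g}_2$ as paths, by observing that the constraint $\phi(x_i(t))=y(t)$ lets you replace the fibrewise-defined $\Phi(\,\cdot\,,\xi(t))$ by the pullback $\Phi^{\dagger}s_t$ of a cutoff extension $s_t$ of $\xi$ built from $g$ alone, so that both base curves are integral curves of one and the same smooth time-dependent vector field $\rho_A\Phi^{\dagger}s_t$ through the same initial point; standard ODE uniqueness plus patching over coordinate subintervals finishes it. Your argument is more elementary and self-contained (it does not depend on Lemmas \ref{lem: injective immersion} and \ref{lem: graphs}, and correctly does not use completeness), and it yields pointwise rather than merely homotopy-class uniqueness of the lift; the paper's argument is shorter given the machinery already in place and keeps the whole embeddability section phrased uniformly in terms of the groupoid $\Sigma_{\iota}(\gamma_{(\phi,\Phi)})$. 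Both are valid proofs of the stated lemma.
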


\begin{proof}
Suppose that $[\tilde{g}_{1}]\neq[\tilde{g}_{2}]$. Then both $[(\tilde{g}_{1},g)]$
and $[(\tilde{g}_{2},g)]$ are admissible paths in $\mathcal{P}(\gamma_{(\phi,\Phi)})$
by Corollary \ref{cor: homotopy}. Thus, we obtain two different elements
$([\tilde{g}_{1}],[g])$ and $([\tilde{g}_{2}],[g])$ in $\Sigma_{\iota}(\gamma_{(\phi,\Phi)})$,
which contradicts  Lemma \ref{lem: uniq of liftings}. 
\end{proof}

Since $r_{B}$ is a surjective submersion, the fiber product $X\times_{Y}\Sigma(B)$
is a closed submanifold of $\Sigma(A)\times\Sigma(B)$. Now Lemma
\ref{lem: graphs} shows that $\Sigma_{\iota}(\gamma_{(\phi,\Phi)})$
is the image of a section $\sigma$ of the fibration
\[
r_{A}\times\id:\Sigma(A)\times\Sigma(B)\rightarrow X\times\Sigma(B)
\]
over the closed submanifold $X\times_{Y}\Sigma(B)$, and where $\sigma(x,[g]):=(x,\psi_{x}([g]))$,
and thus $\Sigma_{\iota}(\gamma_{(\phi,\Phi)})$ is a closed submanifold.
We have thus shown that $\Sigma_{\iota}(\gamma_{(\phi,\Phi)})$ is
the graph of a comorphism $(\phi,\Psi)$ from $\Sigma(A)$ to $\Sigma(B)$,
where
\[
\Psi(x,[g])=\psi_{x}([g])=[\tilde{g}]
\]
with $[\tilde{g}]$ being the unique homotopy class of lifts of $g$
through $x$.

\subsection{Functoriality\label{sub:Functoriality}}

As we explained in Section \ref{sub: path construction}, the path
construction $\Sigma$ associates a source $1$-connected Lie groupoid
$\Sigma(A)$ with an \textit{integrable} Lie algebroid $A$. In the
previous paragraph, we showed that $\Sigma$ associates the comorphism
$\Sigma(\gamma_{(\phi,\Phi)})$ from $\Sigma(A)$ to $\Sigma(B)$
with a \textit{complete} comorphism from $A$ to $B$. 

We want to show that $\Sigma$ is a functor from the category of integrable
Lie algebroids and complete comorphisms to the category of source
$1$-connected Lie groupoids and comorphisms. For this, we need to
show that
\[
\Sigma(R_{2})\circ\Sigma(R_{1})=\Sigma(R_{3}),
\]
where $R_{1}$ is the graph of a comorphism $(\phi_{1},\Phi_{1})$
from $A$ to $B$, $R_{2}$ is the graph of a comorphism $(\phi_{2},\Phi_{2})$
from $B$ to $C$, and $R_{3}$ is the graph of the composition of
$(\phi_{1},\Phi_{1})$ with $(\phi_{2},\Phi_{2})$. The bases of the
integrable Lie algebroids $A$, $B$, and $C$ are, respectively,
$X$, $Y$, and $Z$. 

Recall that the composition of comorphisms between Lie algebroids,
Lie groupoids, or, more generally, between fibrations $r_{A}:A\rightarrow X$,
$r_{B}:B\rightarrow Y$ and $r_{C}:C\rightarrow Z$ is given by
\begin{eqnarray*}
(\phi_{2},\Psi_{2})\circ(\phi_{1},\Psi_{1}) & = & (\phi_{2}\circ\phi_{1},\Psi_{1}\star\Psi_{2}),\\
(\Psi_{1}\star\Psi_{2})(x,c) & = & \Psi_{1}(x,\Psi_{2}(\phi_{1}(x),c)),
\end{eqnarray*}
for $x\in X$ and $c\in r_{C}^{-1}(\phi_{2}\circ\phi_{1}(x))$. This
composition translates in terms of the comorphism graphs $R_{1}$
and $R_{2}$ into the composition of the underlying binary relations:
i.e., the graph of the comorphism composition is the relation $R_{2}\circ R_{1}$
in $A\times C$ obtained by projecting the image of
\[
(R_{1}\times R_{2})\cap(A\times\Delta_{B}\times C),
\]
where $\Delta_{B}$ is the diagonal in $B\times B$, to $A\times C$.
The fact that these relations come from comorphism graphs guarantees
that the result of the composition is a closed submanifold of $A\times C$. 

\begin{rem}
There are three ways of looking at a Lie algebroid comorphism from
$A$ to $B$ $ $: (1) as the pair $(\phi,\Phi)$; (2) as the underlying
relation $\gamma_{(\phi,\psi)}\subset A\times B$; (3) as the corresponding
Lie algebroid $\gamma_{(\phi,\Phi)}\rightarrow\graph\phi$. Similarly,
there are three ways of looking at a Lie groupoid comorphism from
$G$ to $H$: (1) as the pair $(\phi,\Phi)$; (2) as the underlying
relation $R_{(\phi,\Phi)}\subset G\times H$; (3) as the Lie groupoid
$R_{(\phi,\Phi)}\rightrightarrows\graph\phi$. 
\end{rem}

\begin{lem}
$\Sigma(R_{2})\circ\Sigma(R_{1})$ contains $\Sigma(R_{2}\circ R_{1})$. 
\end{lem}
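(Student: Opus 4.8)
The plan is to show the set-theoretic inclusion $\Sigma(R_2\circ R_1)\subseteq \Sigma(R_2)\circ\Sigma(R_1)$ by exhibiting, for each element of the left-hand side, an intermediate element in $\Sigma(B)$ that witnesses membership in the composite relation on the right. First I would unwind the definitions. An element of $\Sigma(R_3)=\Sigma(R_2\circ R_1)$ is the class $[(g,k)]$ of an admissible path in $\gamma_{(\phi_2\circ\phi_1,\,\Phi_1\star\Phi_2)}$, so it has the form
\[
\big(g(t),k(t)\big)=\Big(\big(x(t),(\Phi_1\star\Phi_2)(x(t),\zeta(t))\big),\,\big(\phi_2(\phi_1(x(t))),\zeta(t)\big)\Big),
\]
where $g\in\mathcal{P}(A)$ and $k\in\mathcal{P}(C)$. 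Under $\iota$ this maps to the pair $([g],[k])\in\Sigma(A)\times\Sigma(C)$, and I want to produce $[h]\in\Sigma(B)$ with $([g],[h])\in\Sigma_\iota(R_1)$ and $([h],[k])\in\Sigma_\iota(R_2)$.

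The key step is to build this intermediate path $h\in\mathcal{P}(B)$ directly from the data. By the definition of the comorphism composition, $(\Phi_1\star\Phi_2)(x,\zeta)=\Phi_1(x,\Phi_2(\phi_1(x),\zeta))$, so the natural candidate is
\[
h(t)=\big(\phi_1(x(t)),\,\Phi_2(\phi_1(x(t)),\zeta(t))\big).
\]
I would verify that $h$ is an admissible path in $\mathcal{P}(B)$ using the anchor condition \eqref{eq:anchor} for $(\phi_2,\Phi_2)$ together with the admissibility of $k$ (this is a routine check with the chain rule). With $h$ in hand, the pair $(g,h)$ is an admissible path in $\gamma_{(\phi_1,\Phi_1)}=R_1$ by the explicit form \eqref{eq: g(t)}--\eqref{eq: h(t)}, so $([g],[h])=\iota([(g,h)])\in\Sigma_\iota(R_1)=\Sigma(R_1)$; likewise $(h,k)$ is admissible in $R_2$, giving $([h],[k])\in\Sigma(R_2)$. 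Matching the intermediate $[h]\in\Sigma(B)$ shows $([g],[k])\in\Sigma(R_2)\circ\Sigma(R_1)$, which is exactly the claimed inclusion since $\iota$ is injective by Lemma \ref{lem: injective immersion}.

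The main obstacle I anticipate is bookkeeping rather than conceptual: one must be careful that the path $h$ just constructed has source $r_B(h)=\phi_1(x(0))$ lying in the image of $\phi_1$, so that it is a legitimate element over which the composite relation is formed, and that the endpoints line up so the composition of relations genuinely picks out $([g],[k])$. One subtlety worth flagging is that the relation-composition on the right is defined via intersection with $A\times\Delta_B\times C$ and projection, so I should confirm that the representatives $(g,h)$ and $(h,k)$ really do meet along the \emph{same} class $[h]$ in $\Sigma(B)$ and not merely along homotopic-but-distinct lifts; here the fact that $h$ is manufactured from a single path $\zeta$ (the common fiber component) makes the match canonical. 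Because this direction only requires producing witnesses and not establishing uniqueness or smoothness of the composite, the completeness hypothesis and the lifting propositions are not needed for this lemma; they will enter only in proving the reverse inclusion, which is presumably the content of the next lemma.
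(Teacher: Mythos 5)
Your proposal is correct and follows essentially the same route as the paper: the paper also takes a representative of $[\gamma]\in\Sigma(R_2\circ R_1)$, sets $y(t)=\phi_1(x(t))$ and $\eta(t)=\Phi_2(y(t),\xi(t))$ (your $h$), and observes that the resulting pair of representatives meets along the \emph{same} path in $\mathcal{P}(B)$, so the projection from $\Sigma(A)\times\Delta_{\Sigma(B)}\times\Sigma(C)$ recovers $[\gamma]$. Your remarks that the match of the intermediate is canonical and that completeness is not needed for this inclusion are both consistent with the paper's argument.
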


\begin{proof}
Given $[\gamma]\in\Sigma(R_{2}\circ R_{1})$, we will exhibit an element
$[\gamma_{1}]\times[\gamma_{2}]\in\Sigma(R_{1})\times\Sigma(R_{2})$,
whose image by the projection
\begin{equation}
\Sigma(A)\times\Delta_{\Sigma(B)}\times\Sigma(C)\rightarrow\Sigma(A)\times\Sigma(C)\label{eq:reduction}
\end{equation}
is precisely $[\gamma]$. Namely, a representative of $[\gamma]\in\Sigma(R_{2}\circ R_{1})$
is of the form
\begin{eqnarray*}
\gamma:t & \mapsto & \Big(x(t),\,(\Phi_{1}\star\Phi_{2})(x(t),\xi(t)),\,(\phi_{2}\circ\phi_{1})(x(t)),\,\xi(t)\Big)
\end{eqnarray*}
for some path $t\mapsto(x(t),\xi(t))$. We set
\begin{eqnarray*}
y(t) & := & \phi_{1}(x(t)),\\
\eta(t) & := & \Phi_{2}(y(t),\xi(t)).
\end{eqnarray*}
This gives us two representatives of paths, 
\begin{gather*}
\gamma_{1}=(g_{1},h_{1})\textrm{ in }\Sigma(R_{1}),\\
\gamma_{2}=(g_{2},h_{2})\textrm{ in }\Sigma(R_{2}),
\end{gather*}
respectively given by 
\begin{eqnarray*}
\gamma_{1}:t & \mapsto & \Big(x(t),\Phi_{1}(x(t),\eta(t)),\,\phi_{1}(x(t)),\,\eta(t)\Big),\\
\gamma_{2}:t & \mapsto & \Big(y(t),\Phi_{2}(y(t),\xi(t)),\,\phi_{2}(y(t)),\,\xi(t)\Big).
\end{eqnarray*}
Since $h_{1}=g_{2}$ by definition, we obtain that
\[
[\gamma_{1}]\times[\gamma_{2}]\in\Sigma(A)\times\Delta_{\Sigma(B)}\times\Sigma(C),
\]
and thus $[\gamma_{1}]\times[\gamma_{2}]$ projects via \eqref{eq:reduction}
on the equivalence class of the path
\begin{eqnarray*}
t & \mapsto & \Big(x(t),\,\Phi_{1}(x(t),\eta(t)),\,\phi_{2}(y(t)),\,\xi(t)\Big),
\end{eqnarray*}
which we recognize to be precisely $\gamma$ since 
\[
(\Phi_{1}\star\Phi_{2})(x(t),\xi(t))=\Phi_{1}(x(t),\Phi_{2}(\phi_{1}(x(t)),\xi(t))).
\]
\end{proof}

\begin{lem}
$\Sigma(R_{2}\circ R_{1})$ contains $\Sigma(R_{2})\circ\Sigma(R_{1})$. 
\end{lem}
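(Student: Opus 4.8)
The plan is to prove the reverse inclusion by starting with an arbitrary element of the composition $\Sigma(R_2)\circ\Sigma(R_1)$ and exhibiting it as an element of $\Sigma(R_2\circ R_1)$. So I would take $[\gamma_1]\times[\gamma_2]\in\Sigma(R_1)\times\Sigma(R_2)$ lying in $\Sigma(A)\times\Delta_{\Sigma(B)}\times\Sigma(C)$, meaning that if $\gamma_1=(g_1,h_1)$ and $\gamma_2=(g_2,h_2)$, then $[h_1]=[g_2]$ in $\Sigma(B)$. The goal is to produce a single admissible path $\gamma$ in $\mathcal{P}(R_2\circ R_1)$ whose image under the projection \eqref{eq:reduction} is the projected pair $([g_1],[h_2])$.

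The key difficulty, and the reason this is the harder inclusion, is that $[h_1]=[g_2]$ only says $h_1$ and $g_2$ are \emph{homotopic} in $\mathcal{P}(B)$, not that they are the literal same representative (which was the convenient situation arranged by hand in the forward inclusion). First I would use the homotopy lifting property (Proposition \ref{prop: hom. lifting. prop. II}) applied to the complete comorphism $(\phi_1,\Phi_1)$ to lift the homotopy $h_1\sim g_2$ in $B$ to a homotopy in $A$ whose $s=0$ end is $g_1$; by Corollary \ref{cor: homotopy} this lifts to a homotopy in $\mathcal{P}(R_1)$, so without loss of generality I may replace the representative $\gamma_1$ by a homotopic one whose $B$-component $h_1$ equals $g_2$ \emph{on the nose}. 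After this normalization, I am exactly in the situation produced constructively in the previous lemma: $h_1=g_2$, so writing $g_2(t)=(y(t),\xi(t))$ and recovering $x(t)$ from $g_1(t)=(x(t),\Phi_1(x(t),\eta(t)))$ with $\eta(t)=\Phi_2(y(t),\xi(t))$, the path
\[
\gamma:t\mapsto\Big(x(t),\,(\Phi_1\star\Phi_2)(x(t),\xi(t)),\,(\phi_2\circ\phi_1)(x(t)),\,\xi(t)\Big)
\]
is an admissible path in $\mathcal{P}(R_2\circ R_1)$, using the identity $(\Phi_1\star\Phi_2)(x(t),\xi(t))=\Phi_1(x(t),\Phi_2(\phi_1(x(t)),\xi(t)))$ from the definition of comorphism composition. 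Its class therefore lies in $\Sigma(R_2\circ R_1)$, and its image under \eqref{eq:reduction} is $([g_1],[h_2])$.

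The main obstacle I anticipate is precisely the normalization step: making rigorous that after the homotopy lift one may assume $h_1=g_2$ identically, and checking that performing this replacement does not alter the projected class $([g_1],[h_2])\in\Sigma(A)\times\Sigma(C)$. The lift produced by Proposition \ref{prop: hom. lifting. prop. II} changes $g_1$ only within its homotopy class $[g_1]$, so the first component is preserved; and since only the $B$-component of $\gamma_1$ is being modified while $\gamma_2$ and hence $[h_2]$ are untouched, the projected image is unchanged. One should also confirm that the completeness hypothesis on $(\phi_1,\Phi_1)$ is exactly what licenses the use of the lifting propositions here, and that endpoint-matching for the eventual concatenation is respected throughout. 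Once the representatives are synchronized, the remainder is the same direct computation as in the preceding lemma, with no further analytic input required.
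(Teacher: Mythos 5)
Your proposal is correct and follows essentially the same route as the paper: both arguments use Proposition \ref{prop: hom. lifting. prop. II} (via Corollary \ref{cor: homotopy}) to lift the homotopy between the two $B$-components and thereby replace the representative of the $\Sigma(R_1)$ factor by one whose $B$-component agrees with that of the $\Sigma(R_2)$ factor on the nose, after which the composed path is read off as in the previous lemma. The only stray remark is the mention of ``endpoint-matching for the eventual concatenation'' --- the composition here is composition of binary relations, not concatenation of paths --- but this does not affect the argument.
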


\begin{proof}
For this, consider
\begin{eqnarray*}
[\gamma] & = & [(\gamma_{A},\gamma_{B})]\in\Sigma(R_{1}),\\
{}[\delta] & = & [(\delta_{B},\delta_{C})]\in\Sigma(R_{2}),
\end{eqnarray*}
such that 
\[
([\gamma],[\delta])\in\Sigma(A)\times\Delta_{\Sigma(B)}\times\Sigma(C).
\]
This means that $[\gamma_{B}]$ and $[\delta_{B}]$ are in the same
homotopy class of paths in $\mathcal{P}(B)$. Thus, there is a homotopy
\[
\gamma_{B}\overset{\nu_{B}}{\rightsquigarrow}\delta_{B},\quad\nu_{B}(t,s):=\big(y(t,s),\eta(t,s)\big),
\]
from $\gamma_{B}$ to $\delta_{B}$. The homotopy lifting property
(Proposition \ref{prop: hom. lifting. prop. II}) tells us that we
can lift $\nu_{B}$ to a homotopy $\mu_{A}$ (among the admissible
paths in $\mathcal{P}(A)$) of the form
\[
\gamma_{A}\overset{\mu_{A}}{\rightsquigarrow\delta_{A}},\quad\mu_{A}(t,s):=\big(x(t,s),\,\Phi_{1}(x(t,s),\,\eta(t,s))\big),
\]
such that
\begin{eqnarray*}
\mu_{A}(t,0) & = & \gamma_{A}(t),\\
\phi_{1}(x(t,s)) & = & y(t,s),
\end{eqnarray*}
and where we have set $\delta_{A}:=\mu_{A}(t,1)$. Now putting $\nu_{B}$
and $\mu_{A}$ together, we obtain with Corollary \ref{cor: homotopy}
the homotopy
\[
(\gamma_{A},\gamma_{B})\overset{\theta}{\rightsquigarrow}(\delta_{A},\delta_{B}),\quad\theta:=(\mu_{A},\nu_{B}),
\]
among the paths in $\mathcal{P}(R_{1})$. Thus, we can take $(\delta_{A},\delta_{B})$
as a representative of $[\gamma]\in\Sigma(R_{1})$. Finally, we see
that the representative
\[
([\gamma],[\delta]):=\Big([(\delta_{A},\delta_{B})],\,[(\delta_{B},\delta_{C})]\Big)
\]
projects, after reduction, to $[(\delta_{A},\delta_{C})]$, which
belongs to $\Sigma(R_{2}\circ R_{1})$. 
\end{proof}

\subsection{Equivalence\label{sub:Equivalence}}

We now prove that the functor $\Sigma$ is a homotopy inverse to the
Lie functor $\Lie$. 

Since, by construction, $\Lie\Sigma(A)=A$ for any integrable Lie
algebroid $A$, we have that $\Lie\circ\Sigma$ is the identity functor.
We need to show that $\Sigma\circ\Lie$ is also homotopic to the identity
functor. 

Given a source $1$-connected Lie groupoid $G$, it is well-known
that $\Sigma(\Lie(G))$ is isomorphic, as groupoid, to $G$ (see \cite{CF2003}
for instance). Moreover for each such $G$, there is a canonical groupoid
isomorphism $\alpha_{G}:\Sigma(\Lie(G))\rightarrow G$, which is constructed
as follows:

To begin with, consider the monodromy groupoid $\hat{G}$ of $G$,
whose elements are end-point-fixing homotopy classes $[\gamma]$ of
$G$-paths, that is, paths $\gamma:[0,1]\rightarrow G$ such that
$\gamma(0)$ is a groupoid unit $x$ and $\gamma(t)$ stays in the
source-fiber of $x$ (i.e. $r_{G}(\gamma(t))=x$). When $G$ is source
$1$-connected, the map
\[
{\bf ev}:\hat{G}\rightarrow G:\;[\gamma]\mapsto\gamma(1)
\]
is a groupoid isomorphism. There is also a groupoid isomorphism $D$
from the monodromy groupoid $\hat{G}$ to $\Sigma(\Lie(G))$. Namely,
given a $G$-path $\gamma$, 
\[
D(\gamma)(t)=T_{l_{G}(\gamma(t))}L_{\gamma(t)^{-1}}\dot{\gamma}(t),
\]
where $L_{g}(h)=gh$ is the groupoid left-translation, is an admissible
path in $\Lie(G)$. It turns out that $D$ preserves the homotopy
classes of $G$-paths and $A$-paths and defines a groupoid isomorphism
with inverse $D^{-1}:\Sigma(\Lie(G))\rightarrow\hat{G}$. We refer
the reader to \cite{CF2003} for details and proofs. 

For each source $1$-connected groupoid, we can now define the groupoid
isomorphism 
\[
\alpha_{G}:={\bf ev}\circ D^{-1}:\;\Sigma(\Lie(G))\rightarrow G.
\]
Since isomorphisms in $\Gpdplus$ and $\Gpdminus$ coincide, $\alpha_{G}$
is also a comorphism. 

Let us show now that the $\alpha_{G}$'s are the components of a natural
isomorphism between the identity functor and $\Sigma\circ\Lie$; in
other words, that, for any Lie groupoid comorphism $(\phi,\Phi)$
from $G$ to $H$, the following diagram commutes:

\begin{diagram} 
  \Sigma\circ\Lie(G)             & \rTo^{\alpha_G}            & G              \\
  \dTo^{\Sigma\circ\Lie(\phi,\Phi)}        &                     & \dTo_{(\phi,\Phi)}    \\
  \Sigma\circ\Lie(H)     & \rTo_{\alpha_H}            & H
\end{diagram}

This follows from the following lemmas.

\begin{lem}
\label{lem: comutation in bin}The diagram above commutes if and only
if 
\begin{equation}
(\alpha_{G}\times\alpha_{H})(\gamma_{\Sigma\circ\Lie(\phi,\Phi)})=\gamma_{(\phi,\Phi)},\label{eq:natural isom}
\end{equation}
where $\gamma_{\Sigma\circ\Lie(\phi,\Phi)}$ and $\gamma_{(\phi,\Phi)}$
are the graphs of the corresponding comorphisms.
\end{lem}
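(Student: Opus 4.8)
The plan is to translate the diagram-commutativity statement into a statement about the underlying comorphism graphs, using the principle, stated earlier in the paper, that a comorphism between Lie groupoids is completely determined by its graph as a subgroupoid of the product. The diagram asserts the equality of two comorphisms from $\Sigma\circ\Lie(G)$ to $H$: namely $(\phi,\Phi)\circ\alpha_{G}$ and $\alpha_{H}\circ\Sigma\circ\Lie(\phi,\Phi)$. Two comorphisms are equal precisely when their graphs (viewed as closed subgroupoids of the relevant product) coincide, so I would begin by writing down the graph of each of these two composites and comparing them.

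The key computational device is the behavior of comorphism graphs under composition, recalled in Section \ref{sub:Functoriality}: the graph of a composite comorphism is obtained by composing the underlying binary relations. Since $\alpha_{G}$ and $\alpha_{H}$ are isomorphisms (hence both morphisms and comorphisms), their graphs are themselves graphs of honest maps, and composing a relation with the graph of an isomorphism amounts to pushing the relation forward by that isomorphism. Concretely, I would show that the graph of $(\phi,\Phi)\circ\alpha_{G}$ inside $\bigl(\Sigma\circ\Lie(G)\bigr)\times H$ is the image of $\gamma_{(\phi,\Phi)}\subset G\times H$ under $\alpha_{G}^{-1}\times\id_{H}$, and dually that the graph of $\alpha_{H}\circ\Sigma\circ\Lie(\phi,\Phi)$ is the image of $\gamma_{\Sigma\circ\Lie(\phi,\Phi)}$ under $\id\times\alpha_{H}$. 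Setting these two subsets equal and transporting everything to $G\times H$ by applying $\alpha_{G}\times\alpha_{H}$ yields exactly equation \eqref{eq:natural isom}. Thus the ``if and only if'' falls out once one checks that each side of the diagram corresponds, under $\alpha_{G}\times\alpha_{H}$, to the two sides of \eqref{eq:natural isom}.

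In carrying this out I would be careful about the direction of the arrows: for comorphisms the graph lives in the product of source and target, but the bookkeeping map $\Phi$ goes \emph{backwards} along the core map, so I must track which factor of the product each isomorphism acts on and apply $\alpha_G^{-1}$ versus $\alpha_G$ consistently. The cleanest formulation is to use that, because $\alpha_{G}$ and $\alpha_{H}$ are groupoid isomorphisms, the map $\alpha_{G}\times\alpha_{H}$ is a diffeomorphism of $\bigl(\Sigma\circ\Lie(G)\bigr)\times\bigl(\Sigma\circ\Lie(H)\bigr)$ onto $G\times H$ that carries subgroupoids to subgroupoids; under this diffeomorphism, diagram commutativity is literally the assertion that $\gamma_{\Sigma\circ\Lie(\phi,\Phi)}$ is carried onto $\gamma_{(\phi,\Phi)}$.

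The main obstacle I anticipate is purely notational rather than conceptual: one must verify that composition of comorphisms with isomorphisms corresponds exactly to pushing graphs forward by the product isomorphism, and that no spurious twist or reversal is introduced by the $\star$ convention for comorphism composition defined in Section \ref{sub:Functoriality}. Once the graph-composition formalism is applied correctly, the equivalence is essentially formal, since it reduces to the elementary fact that, for maps between sets, the graph of $\beta\circ f\circ\alpha^{-1}$ is the image of the graph of $f$ under $\alpha\times\beta$, specialized to the comorphism setting where all the maps in question are realized by their graphs.
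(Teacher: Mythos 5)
Your proposal is correct and follows essentially the same route as the paper: both reduce diagram-commutativity to an equality of comorphism graphs via the fact that composition of comorphisms corresponds to composition of their graphs as binary relations, and then invoke the elementary set-theoretic observation that composing a relation with the graph of a bijection amounts to pushing the relation forward, so that the equality of the two composites is equivalent to $(\alpha_{G}\times\alpha_{H})(\gamma_{\Sigma\circ\Lie(\phi,\Phi)})=\gamma_{(\phi,\Phi)}$. Your bookkeeping of which factor each isomorphism acts on is consistent with the paper's formulation $R\circ\graph\alpha_{G}=\graph\alpha_{H}\circ R'$ iff $(\alpha_{G}\times\alpha_{H})(R')=R$.
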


\begin{proof}
This comes from the fact that composition of comorphisms is the same
as the composition of their underlying graphs as binary relations
(i.e. $\gamma_{(\phi_{1},\Phi_{1})}\circ\gamma_{(\phi_{2},\Phi_{2})}=\gamma_{(\phi_{1},\Phi_{1})\circ(\phi_{2},\Phi_{2})}$).
Namely, one checks that for sets $G,H,G',H'$ , bijections $\alpha_{G}:G'\rightarrow G$
and $\alpha_{H}:H'\rightarrow H$ , and binary relations $R\subset G\times H$
and $R'\subset G'\times H'$ , we have $R\circ\graph\alpha_{G}=\graph\alpha_{H}\circ R'$
(where the composition is the composition of binary relations) if
and only if $(\alpha_{G}\times\alpha_{H})(R')=R$. 
\end{proof}

\begin{lem}
\label{lem:connectedness}Let $G\rightrightarrows X$ and $H\rightrightarrows Y$
be Lie groupoids, with $H$ source $1$-connected. Then the graph
$\gamma_{(\phi,\Phi)}$ of a comorphism $(\phi,\Phi)$ from $G$ to
$H$ (seen as a subgroupoid of $G\times H$) is source $1$-connected.
\end{lem}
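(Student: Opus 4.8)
The plan is to reduce source $1$-connectedness of the graph groupoid to that of $H$, using the defining property of a comorphism graph recorded just after its introduction: projection to $H$ identifies each source fibre of $\gamma_{(\phi,\Phi)}$ with a source fibre of $H$. Since $H$ is assumed source $1$-connected, transporting $1$-connectedness across these diffeomorphisms will give the claim.

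First I would unwind the definitions. Recall that $\gamma_{(\phi,\Phi)}$ is the subgroupoid of $G\times H\rightrightarrows X\times Y$ whose object space is $\graph\phi$ and whose arrows are the pairs $(g,h)$ with $g=\Phi(r_{G}(g),h)$. The source map of the product groupoid sends $(g,h)$ to $(r_{G}(g),r_{H}(h))$, and for such a pair to be an arrow of $\gamma_{(\phi,\Phi)}$ its source must lie in $\graph\phi$, forcing $r_{H}(h)=\phi(r_{G}(g))$. Hence the source fibre of $\gamma_{(\phi,\Phi)}$ over an object $(x,\phi(x))\in\graph\phi$ is
\[
\{(g,h):\ r_{G}(g)=x,\ r_{H}(h)=\phi(x),\ g=\Phi(x,h)\}.
\]

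Next I would exhibit the diffeomorphism with a source fibre of $H$. Because $\Phi$ is a map to $r_{G}$ from the pullback $X\times_{Y}H$, one has $r_{G}(\Phi(x,h))=x$ automatically, so for every $h\in r_{H}^{-1}(\phi(x))$ the pair $(\Phi(x,h),h)$ lies in the source fibre displayed above. Thus the projection $(g,h)\mapsto h$ is a bijection of this source fibre onto $r_{H}^{-1}(\phi(x))$, with smooth inverse $h\mapsto(\Phi(x,h),h)$; this is exactly the property stated after the definition of a comorphism graph, namely that the projection to $H$ restricts to a diffeomorphism from each source fibre of $\gamma_{(\phi,\Phi)}$ onto the corresponding source fibre of $H$.

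Finally I would conclude. Since $H$ is source $1$-connected, every source fibre $r_{H}^{-1}(y)$ is connected and simply connected; in particular this holds for $y=\phi(x)$ as $(x,\phi(x))$ ranges over $\graph\phi$, these being the only source fibres of $H$ that arise. Transporting connectedness and simple connectedness across the diffeomorphism of the previous step, every source fibre of $\gamma_{(\phi,\Phi)}$ is $1$-connected, which is precisely the assertion that $\gamma_{(\phi,\Phi)}$ is source $1$-connected. The result is essentially immediate once the bookkeeping is in place, so the only point requiring care is the correct identification of the source fibre and the verification that the comorphism's fibrewise diffeomorphism is the map needed; I do not expect a genuine obstacle beyond this.
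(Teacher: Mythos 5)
Your proof is correct and is essentially the paper's own argument: the paper identifies the source fibre of $\gamma_{(\phi,\Phi)}$ over $(x,\phi(x))$ with the graph of $\Phi_{x}$, whose domain is the $1$-connected source fibre $r_{H}^{-1}(\phi(x))$, which is exactly your diffeomorphism $h\mapsto(\Phi(x,h),h)$ spelled out in slightly more detail.
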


\begin{proof}
We need to show that the source fiber $s^{-1}(x,\phi(x))$ in $\gamma_{(\phi,\Phi)}$
is $1$-connected for all $x\in X$. Since $\gamma_{(\phi,\Phi)}$
is a subgroupoid of $G\times H$, we have that
\[
s^{-1}(x,\phi(x))=(s_{G}\times s_{H})^{-1}(x,\phi(x))\cap\gamma_{(\phi,\Phi)}=\graph\Phi_{x}.
\]
Because the domain $s_{H}^{-1}(x)$ of $\Phi_{x}$ is $1$-connected
by assumption, so is its graph. \end{proof}

\begin{lem}
\label{lem: alphas}We have that $\alpha_{G\times H}=\alpha_{G}\times\alpha_{H},$
for $G$ and $H$ source $1$-connected Lie groupoids. Moreover, the
restriction of $\alpha_{G}$ to a source $1$-connected Lie subgroupoid
$H$ coincides with $\alpha_{H}$; in other words, the following diagram
commutes:\begin{diagram} 
  \Sigma\circ\Lie(H)             & \rTo^{k}            & \Sigma\circ\Lie(G)              \\
  \dTo^{\alpha_H}        &                     & \dTo_{\alpha_G}    \\
  H     & \rTo_{i}            & G
\end{diagram}where $k$ sends a homotopy class $[\gamma]_{H}$ of admissible paths
in $\Lie(H)$ to the homotopy class of path $[\gamma]_{G}$ in $\Lie(G)$,
and $i$ is the inclusion of $G$ in $H$. 
\end{lem}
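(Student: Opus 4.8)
The plan is to reduce both assertions to the componentwise (resp.\ restriction) behaviour of the three ingredients that build $\alpha_G=\mathbf{ev}\circ D^{-1}$: the monodromy groupoid $\hat{G}$, the evaluation map $\mathbf{ev}$, and the logarithmic derivative $D$. For the product formula I would first record that the Lie and path functors both respect products, so that $\Lie(G\times H)=\Lie(G)\times\Lie(H)$ and, since an admissible path (resp.\ a homotopy) in a product Lie algebroid is exactly a pair of admissible paths (resp.\ homotopies), $\Sigma(\Lie(G)\times\Lie(H))=\Sigma(\Lie(G))\times\Sigma(\Lie(H))$. A $(G\times H)$-path is a pair $(\gamma^{G},\gamma^{H})$ of a $G$-path and an $H$-path, whence $\widehat{G\times H}=\hat{G}\times\hat{H}$ and $\mathbf{ev}_{G\times H}=\mathbf{ev}_{G}\times\mathbf{ev}_{H}$. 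Because left translation in $G\times H$ is $L_{g}\times L_{h}$ and $\dot{(\gamma^{G},\gamma^{H})}=(\dot\gamma^{G},\dot\gamma^{H})$, the defining formula for $D$ gives $D_{G\times H}=D_{G}\times D_{H}$. Since these are groupoid isomorphisms, $(D_{G}\times D_{H})^{-1}=D_{G}^{-1}\times D_{H}^{-1}$, and composing yields $\alpha_{G\times H}=(\mathbf{ev}_{G}\times\mathbf{ev}_{H})\circ(D_{G}^{-1}\times D_{H}^{-1})=\alpha_{G}\times\alpha_{H}$.

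For the restriction statement I would first identify $k$ with the natural immersion $\iota\colon\Sigma(\Lie(H))\to\Sigma(\Lie(G))$ attached to the subalgebroid $\Lie(H)\subset\Lie(G)$, as in Section~\ref{sub: path construction}: an admissible path in $\Lie(H)$ is an admissible path in $\Lie(G)$, and an $H$-homotopy is a $G$-homotopy, so $[\gamma]_{H}\mapsto[\gamma]_{G}$ is well defined. Given $[\gamma]_{H}$, set $c:=D_{H}^{-1}([\gamma]_{H})$, an $H$-path with $c(0)$ a unit; then $\alpha_{H}([\gamma]_{H})=c(1)$, and $i(\alpha_{H}([\gamma]_{H}))=c(1)$ read inside $G$. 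The crux is to show that $c$, regarded as a $G$-path, is the $D_{G}$-preimage of $k([\gamma]_{H})$, i.e.\ that $D_{G}(c)=\gamma$ under the inclusion $\Lie(H)\hookrightarrow\Lie(G)$. Since $c$ stays in the source fibre of $H$ through a unit and $H$ is a subgroupoid, $c$ stays in the corresponding source fibre of $G$, so $c$ is a genuine $G$-path; moreover $\dot c(t)$ is tangent to $H$, and $c(t)^{-1}\in H$ with $L_{c(t)^{-1}}$ in $G$ restricting on $H$ to left translation in $H$. Hence $T L_{c(t)^{-1}}\dot c(t)$ computed in $G$ lands in the subbundle $\Lie(H)\subset\Lie(G)$ and equals the element computed in $H$; that is, $D_{G}(c)=\gamma$. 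Therefore $D_{G}^{-1}(k[\gamma]_{H})=[c]_{G}$ and $\alpha_{G}(k([\gamma]_{H}))=\mathbf{ev}_{G}([c]_{G})=c(1)=i(\alpha_{H}([\gamma]_{H}))$, which is exactly the commutativity of the diagram (here $i\colon H\hookrightarrow G$ is the inclusion, as the diagram dictates).

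The main obstacle will be precisely this key step of the restriction part: carefully justifying $D_{G}(c)=D_{H}(c)$ under the inclusion $\Lie(H)\hookrightarrow\Lie(G)$. This rests on the compatibility of the Lie functor with subgroupoids, namely that $\Lie(H)$ sits inside $\Lie(G)$ as the left-invariant vector fields tangent to $H$, together with the fact that left translation in $G$ by elements of $H$ preserves $H$. Once this identification of $D$ with the inclusion is in hand, both assertions become purely formal manipulations of the three factors defining $\alpha$, and the product formula in particular is immediate.
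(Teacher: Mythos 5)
Your proposal is correct and follows essentially the same route as the paper: the product formula is obtained from the componentwise behaviour of $\widehat{(\cdot)}$, $\mathbf{ev}$, and $D$, and the restriction statement from the observation that an admissible $\Lie(H)$-path integrates to the same $G$-path whether one works in $H$ or in $G$, followed by a diagram chase. You merely spell out in more detail the key compatibility $D_{G}(c)=D_{H}(c)$ under the inclusion, which the paper asserts with less justification.
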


\begin{proof}
The first statement follows from the facts that
\begin{eqnarray*}
\Sigma\circ\Lie(G\times H) & = & \Sigma\circ\Lie(G)\times\Sigma\circ\Lie(H),\\
\widehat{G\times H} & = & \widehat{G}\times\widehat{H},\\
{\bf ev}_{G\times H} & = & {\bf ev}_{G}\times{\bf ev}_{H},\\
D_{G\times H} & = & D_{G}\times D_{H}.
\end{eqnarray*}
As for the second statement, observe first that $k$ is well-defined,
since an admissible path in $\Lie(H)$ is also an admissible path
in $\Lie(G)$, and homotopic $\Lie(H)$-paths are also homotopic as
$\Lie(G)$-paths (since $\Lie(H)$-homotopies are also $\Lie(G)$-homotopies).
Moreover, an admissible $\Lie(H)$-path $\gamma$ integrates to a
$H$-path, which, considered as a $G$-path, is the same as the one
$\gamma$ integrates to when considered as a $\Lie(G)$-path. We can
then conclude by chasing in the diagram above, starting with the representative
$\gamma$. 
\end{proof}

\begin{lem}
The $\alpha$'s defined above are the components of a natural transformation.
\end{lem}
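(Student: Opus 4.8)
The plan is to verify the naturality square pointwise by reducing it, via Lemma \ref{lem: comutation in bin}, to a single identity about graphs, and then to deduce that identity formally from the two parts of Lemma \ref{lem: alphas}. Concretely, fix a Lie groupoid comorphism $(\phi,\Phi)$ from a source $1$-connected $G$ to a source $1$-connected $H$. By Lemma \ref{lem: comutation in bin}, the square for $(\phi,\Phi)$ commutes if and only if
\[
(\alpha_{G}\times\alpha_{H})\big(\gamma_{\Sigma\circ\Lie(\phi,\Phi)}\big)=\gamma_{(\phi,\Phi)}.
\]
So the whole statement comes down to computing the image of the graph $\gamma_{\Sigma\circ\Lie(\phi,\Phi)}$ under the product map $\alpha_{G}\times\alpha_{H}$.

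The first real step is to rewrite the left-hand graph intrinsically. Since both $\Lie$ and $\Sigma$ act on a comorphism through its graph subgroupoid (as described in Section \ref{sub: path construction}), and since $\Lie(\phi,\Phi)$ is complete by Proposition \ref{prop: Lie} so that $\Sigma$ is defined on it, the graph $\gamma_{\Sigma\circ\Lie(\phi,\Phi)}$ is exactly the image of the integrating immersion applied to the subgroupoid $\gamma_{(\phi,\Phi)}\subset G\times H$. That is, under the identification $\Sigma\circ\Lie(G\times H)=\Sigma\circ\Lie(G)\times\Sigma\circ\Lie(H)$ from the first part of Lemma \ref{lem: alphas}, it equals the image of the path-class map
\[
k:\Sigma\circ\Lie(\gamma_{(\phi,\Phi)})\longrightarrow\Sigma\circ\Lie(G\times H).
\]
Here I invoke Lemma \ref{lem:connectedness} to know that $\gamma_{(\phi,\Phi)}$ is itself a source $1$-connected Lie subgroupoid of $G\times H$, so that $\alpha_{\gamma_{(\phi,\Phi)}}$ and $k$ are defined and the hypotheses of Lemma \ref{lem: alphas} are met.

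With this in hand the conclusion is a short chase. Using the first part of Lemma \ref{lem: alphas} to replace $\alpha_{G}\times\alpha_{H}$ by $\alpha_{G\times H}$, and the second part (the commuting square for the subgroupoid $\gamma_{(\phi,\Phi)}$, with inclusion $i$) to write $\alpha_{G\times H}\circ k=i\circ\alpha_{\gamma_{(\phi,\Phi)}}$, I get
\[
(\alpha_{G}\times\alpha_{H})\big(\operatorname{im}k\big)=\operatorname{im}\big(\alpha_{G\times H}\circ k\big)=\operatorname{im}\big(i\circ\alpha_{\gamma_{(\phi,\Phi)}}\big)=\gamma_{(\phi,\Phi)},
\]
the last equality because $\alpha_{\gamma_{(\phi,\Phi)}}$ is an isomorphism onto $\gamma_{(\phi,\Phi)}$ and $i$ is the inclusion. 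This is precisely the identity demanded by Lemma \ref{lem: comutation in bin}, so the square commutes for every $(\phi,\Phi)$; since each $\alpha_{G}$ is a groupoid isomorphism, hence a comorphism isomorphism, the $\alpha$'s assemble into a natural isomorphism $\Sigma\circ\Lie\Rightarrow\id$.

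The step I expect to be the genuine obstacle, deserving care rather than a one-line assertion, is the intrinsic identification $\gamma_{\Sigma\circ\Lie(\phi,\Phi)}=\operatorname{im}k$ of the second paragraph. This is the bookkeeping that ``take the graph, then integrate the subgroupoid'' agrees with ``integrate the comorphism, then take its graph,'' and it rests on the explicit description of the immersion in Section \ref{sub: path construction} together with the product compatibility of $\Sigma\circ\Lie$ with $\times$. Once this identification is established, everything else is a formal diagram chase through Lemmas \ref{lem: comutation in bin} and \ref{lem: alphas}.
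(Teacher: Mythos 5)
Your proof is correct and follows essentially the same route as the paper's: reduce via Lemma \ref{lem: comutation in bin} to the graph identity, use Lemma \ref{lem:connectedness} to see $\gamma_{(\phi,\Phi)}$ is source $1$-connected, and apply both parts of Lemma \ref{lem: alphas} to identify the restriction of $\alpha_{G}\times\alpha_{H}=\alpha_{G\times H}$ to $\gamma_{\Sigma\circ\Lie(\phi,\Phi)}$ with $\alpha_{\gamma_{(\phi,\Phi)}}$. The identification $\gamma_{\Sigma\circ\Lie(\phi,\Phi)}=\operatorname{im}k$ that you flag as the delicate point is in fact how $\Sigma$ acts on comorphisms by construction (Section \ref{sub:Embeddability}), so the paper passes over it silently; your making it explicit is a reasonable elaboration, not a divergence.
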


\begin{proof}
By Lemma \ref{lem: comutation in bin}, we only need to show that
the Lie groupoid comorphisms $(\alpha_{G}\times\alpha_{H})(\gamma_{\Sigma\circ\Lie(\phi,\Phi)})$
and $\gamma_{(\phi,\Phi)}$ coincide. Since $\gamma_{(\phi,\Phi)}$
is a source $1$-connected groupoid by Lemma \ref{lem:connectedness},
we have that $\alpha_{\gamma_{(\phi,\Phi)}}$ is an isomorphism from
$\gamma_{\Sigma\circ\Lie(\phi,\Phi)}$ to $\gamma_{(\phi,\Phi)}$.
From Lemma \ref{lem: alphas}, we can conclude that the restriction
of $\alpha_{G}\times\alpha_{H}=\alpha_{G\times H}$ to the subgroupoid
$\gamma_{\Sigma\circ\Lie(\phi,\Phi)}$ coincides with $\alpha_{\gamma_{(\phi,\Phi)}}$,
whose image is exactly $\gamma_{(\phi,\Phi)}$.
\end{proof}

Consequently, $\Sigma$ is a homotopy inverse to the Lie functor,
implementing thus an equivalence of categories. As corollary, we have
that

\begin{cor}
\label{prop:faithfulness}The Lie functor is faithful. In other words,
the Lie groupoid comorphism integrating a complete Lie algebroid comorphism
is unique.
\end{cor}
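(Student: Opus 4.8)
The plan is to obtain the faithfulness of $\Lie$ as a formal consequence of the equivalence established just above: an equivalence of categories is automatically fully faithful, hence in particular faithful, and faithfulness of $\Lie$ on hom-sets is exactly the assertion that the integrating Lie groupoid comorphism is unique. I would spell out the one direction that is actually needed rather than merely cite the general categorical fact.

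First I would recall the commuting naturality square proved in the preceding lemma, which states that for every Lie groupoid comorphism $c$ from $G$ to $H$ (with $G,H$ source $1$-connected) one has
\[
c\circ\alpha_{G}=\alpha_{H}\circ\Sigma\Lie(c).
\]
Since isomorphisms in $\Gpdplus$ and $\Gpdminus$ coincide, each $\alpha_{G}$ is an invertible comorphism, so this rearranges to
\[
c=\alpha_{H}\circ\Sigma\Lie(c)\circ\alpha_{G}^{-1},
\]
exhibiting $c$ as entirely determined by its image $\Lie(c)$ together with the fixed canonical isomorphisms $\alpha_{G}$ and $\alpha_{H}$.

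Next I would take two Lie groupoid comorphisms $(\phi,\Psi_{1})$ and $(\phi,\Psi_{2})$ from $G$ to $H$ that both integrate a given complete Lie algebroid comorphism $(\phi,\Phi)$ from $A=\Lie(G)$ to $B=\Lie(H)$, that is, $\Lie(\phi,\Psi_{1})=(\phi,\Phi)=\Lie(\phi,\Psi_{2})$. Applying the functor $\Sigma$ gives $\Sigma\Lie(\phi,\Psi_{1})=\Sigma\Lie(\phi,\Psi_{2})$, and substituting into the rearranged identity above immediately yields $(\phi,\Psi_{1})=(\phi,\Psi_{2})$. This is precisely the uniqueness claimed.

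I do not expect any genuine obstacle at this stage: every step reduces to the naturality square together with the invertibility of the $\alpha_{G}$'s as comorphisms, both of which are already in hand. The substantive work was done earlier, in verifying that the $\alpha_{G}$ assemble into a natural isomorphism $\id\Rightarrow\Sigma\circ\Lie$ and that $\Lie\circ\Sigma=\id$; once the equivalence is established, faithfulness is purely formal.
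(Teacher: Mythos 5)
Your proposal is correct and matches the paper's approach: the paper offers no separate argument, presenting the corollary as an immediate formal consequence of the equivalence $\Sigma\dashv$-free statement that $\Sigma$ is a homotopy inverse to $\Lie$, and your explicit unwinding via the naturality square $c\circ\alpha_{G}=\alpha_{H}\circ\Sigma\Lie(c)$ and the invertibility of the $\alpha$'s is exactly the standard argument being invoked. Nothing is missing.
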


\section{The symplectization functor\label{sec:Application-to-poisson}}

There is an immediate application of Theorem \ref{thm: Dazord} in
Poisson geometry. Namely, this theorem implies, as we will see below,
that the integration of Poisson manifolds by symplectic groupoids
using the path construction is an actual functor from the category
of integrable Poisson manifolds and \textit{complete }Poisson maps
to the category $\SGpd$ of source $1$-connected symplectic groupoids
and \textit{symplectic comorphisms.} 

A \textbf{symplectic comorphism} from symplectic groupoids $G\rightrightarrows X$
to $H\rightrightarrows Y$ is a comorphism $(\phi,\Phi)$ whose underlying
graph $\gamma_{(\phi,\Phi)}$ is a canonical relation from $G$ to
$H$. In contrast with general canonical relations, symplectic comorphisms
always compose well (because they are comorphisms in the first place),
and thus form a category. Observe that the graph $\gamma_{(\phi,\Phi)}$
of a symplectic comorphism is a lagrangian subgroupoid of $\overline{G}\times H$

A \textbf{complete Poisson map} $\phi$ from $X$ to $Y$ is a Poisson
map with the property that the hamiltonian vector field $\xi_{\phi^{*}f}$
on $X$ with hamiltonian $\phi^{*}f$ is complete if the hamiltonian
vector field $\xi_{f}$ on $Y$ with hamiltonian $f\in C^{\infty}(Y)$
is complete. 

Fernandes in \cite{fernandes2006} studied constructions in Poisson
geometry involving the integration of Poisson manifolds seen as a
functor, which he called the ``symplectization functor.'' However,
in \cite{fernandes2006} the domain of this functor comprises all
Poisson maps and its range has for morphisms $ $from $G$ to $H$
all the lagrangian subgroupoids of $\overline{G}\times H$, instead
of only those that are graphs of symplectic comorphisms. This choice
has as a consequence that the range category is not an honest category
(the compositions are not always well-defined). Moreover, if we drop
the completeness condition, there are Poisson maps that do not integrate
to symplectic comorphisms as illustrated in the example below. Hence,
the symplectization functor is not a true functor with this choice
of domain and range.

\begin{example}
Consider the non-complete and non-integrable Lie algebroid comorphism
$(\phi,\Phi)$ from $TX$ to $TY$ of Section \ref{sec:The-example}.
Its dual $\Phi^{*}$ is a non-complete Poisson map from cotangent
bundles $T^{*}X$ to $T^{*}Y$ endowed with their canonical Poisson
structure. Let us see that $\Phi^{*}$ is also non-integrable. Since
the graph $\graph\phi^{*}$ is a coisotropic submanifold of the \textit{symplectic}
manifold $\overline{T^{*}X}\times T^{*}Y$ (with symplectic form $\Omega=-\omega+\omega$),
the (immersed) lagrangian subgroupoid integrating $\Phi^{*}$ can
be identified with the leafwise fundamental groupoid of the characteristic
foliation $\tilde{\mathcal{F}}$ of $\graph\phi^{*}$ (whose associated
distribution we denote by $\tilde{\Delta}$). For two vectors in the
tangent space to $\graph\phi^{*}$ (which we identify with vectors
$\underline{v}=v\oplus\theta_{v}$ in $TT^{*}X\simeq TX\oplus T^{*}X$),
we have that
\[
\Omega(\underline{v},\underline{w})=-\langle(\id-\Phi\circ T\phi)v,\,\theta_{w}\rangle+\langle(\id-\Phi\circ T\phi)w,\,\theta_{v}\rangle.
\]
From this last equation, we see that $\Omega(\underline{v},\underline{w})=0$
for all vectors $\underline{v}$ tangent to $\graph\phi^{*}$ iff
$\underline{w}\in\im\Phi\oplus(\ker T\phi)^{0}$, where $\im\Phi$
is the distribution of the foliation given by the flat Ehresmann connection
associated with $(\phi,\Phi)$ and $(\ker T\phi)^{0}$ is the annihilator
of vertical distribution associated with the submersion $\phi$. Hence,
\[
\tilde{\Delta}=\im\Phi\oplus(\ker T\phi)^{0},
\]
and the leafwise fundamental groupoid of $\tilde{\mathcal{F}}$ may
be parametrized by $\tilde{\Gamma}=T^{*}\Gamma$, where 
\[
\Gamma=(\reals^{+}\times S^{1})\times(\reals^{+}\times\reals)\times\complex\times(-1,1)
\]
parametrizes the leafwise fundamental groupoid of the Ehresmann connection
on $X$ as described in Section \ref{sec:The-example}. The element
\[
\tilde{\gamma}=(r,\,\theta,\, r',\,\tau,\, z,\, h,\,\xi_{r},\,\xi_{\theta},\,\xi_{r}',\,\xi_{\tau},\,\xi_{z},\,\xi_{h})
\]
of $\tilde{\Gamma}$ corresponds to the homotopy class of the leafwise
path 
\[
t\mapsto\Big(r+(r'-r)t,\,\theta+\tau t,\, e^{i\nu(h)\tau t}z,\, h,\,\xi_{r}+(\xi_{r}'-\xi_{r})t,\,\xi_{\theta}+\xi_{\tau}t,\,\xi_{z},\,\xi_{h}\Big),
\]
with $0\leq t\leq1$. We see that we obtain the same non-trivial self-intersections
for the immersion 
\[
\tilde{\Gamma}\rightarrow\overline{T^{*}X}\times T^{*}X\times\overline{T^{*}Y}\times T^{*}Y
\]
at $z=0$ for the exact same reasons as in Section \ref{sec:The-example}.
\end{example}

As with Lie algebroids, there is a Lie functor $\Lie$ from $\SGpd$
to the category of Poisson manifolds and Poisson maps. It takes a
symplectic groupoid $G\rightrightarrows X$ to the Poisson manifold
$(X,\Pi_{X})$, where $\Pi_{X}$ is the unique Poisson structure turning
$r_{G}$ into a Poisson map (and $l_{G}$ into an anti-Poisson map).
Since the graph of a symplectic comorphism $(\phi,\Phi)$ from $G\rightrightarrows X$
to $H\rightrightarrows Y$ is a lagrangian subgroupoid $\gamma_{(\phi,\Phi)}\rightrightarrows\graph\phi$,
this implies that the graph of $\phi$ is a coisotropic submanifold
(see \cite{cattaneo2004}), and, hence, that $\phi$ is a Poisson
map. The Lie functor on morphisms is thus defined as $\Lie(\phi,\Phi)=\phi$. 

The path construction can also be extended in a functorial way to
the Poisson realm. Namely, integrating a Poisson manifold $X$ is
equivalent to integrating its associated Lie algebroid $T^{*}X\rightarrow X$,
whose bracket on sections is the Koszul bracket and whose anchor map
is the map $\Pi^{\sharp}:T^{*}X\rightarrow TX$ associated with the
Poisson bivector field $\Pi\in\Gamma(\wedge^{2}TX)$. The path construction
applied to this Lie algebroid yields a symplectic groupoid $\Sigma(T^{*}X)\rightrightarrows X$
(when the Poisson manifold is integrable as seen in \cite{CF2001}),
that is, a groupoid whose total space is symplectic and whose multiplication
graph is a lagrangian subgroupoid of $\overline{\Sigma(T^{*}X)}\times\overline{\Sigma(T^{*}X)}\times\Sigma(T^{*}X)$.
(The bar on a Poisson manifold denotes the same Poisson manifold but
with opposite Poisson structure.) 

To extend the path construction to Poisson maps, we need the following
proposition, which can already be (partly) found in \cite{HM1993}:

\begin{prop}
\label{prop: complete} Let $(X,\Pi_{X})$ and $(Y,\Pi_{Y})$ be two
Poisson manifolds. A smooth map $\phi:X\rightarrow Y$ is a Poisson
map if and only if its cotangent map $\Cot\phi$ is a Lie algebroid
comorphism from $\Cot X$ to $\Cot Y$ (with the Lie algebroid structure
described above). Moreover, $\phi$ is complete if and only if $T^{*}\phi$
is.
\end{prop}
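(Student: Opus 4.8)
The plan is to translate every piece of the comorphism data into Poisson-geometric terms and then read off both assertions. Writing $A=\Cot X$ and $B=\Cot Y$ with their cotangent Lie algebroid structures, the anchors are $\rho_A=\Pi_X^\sharp$ and $\rho_B=\Pi_Y^\sharp$, and the bracket is the Koszul bracket on $1$-forms. The comorphism $\Cot\phi$ has core map $\phi$ and fibre map $\Phi(x,\beta)=(d\phi_x)^*\beta$, so that the induced pullback on sections is $\Phi^\dagger=\phi^*$ (ordinary pullback of $1$-forms) and the dual bundle map is $\Phi^*=T\phi$. With these identifications the anchor condition \eqref{eq:anchor}, namely $\phi_*\circ\rho_A\circ\Phi=\rho_B$, becomes $d\phi_x\circ\Pi_X^\sharp\circ(d\phi_x)^*=\Pi_Y^\sharp$ at every $x$, which is exactly the classical characterisation of $\phi$ being a Poisson map. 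First I would verify this equivalence, and then check that when $\phi$ is Poisson the map $\phi^*=\Phi^\dagger$ automatically intertwines the Koszul brackets (on exact forms this is $\phi^*d\{f,g\}_Y=d\{\phi^*f,\phi^*g\}_X$, and the general case follows from the Leibniz-type formula defining the Koszul bracket). Since these two conditions are precisely the defining conditions of a Lie algebroid comorphism, this settles the first assertion in both directions.

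For the completeness statement I would first unwind the two notions. A complete section of $B=\Cot Y$ is a $1$-form $\beta$ on $Y$ whose anchor image $V:=\Pi_Y^\sharp\beta$ is a complete vector field, and $\Cot\phi$ is complete precisely when, for every such $\beta$, the pulled-back form $\phi^*\beta$ is again complete, i.e.\ $W:=\Pi_X^\sharp(\phi^*\beta)$ is complete. The anchor identity established above says exactly that $W$ is $\phi$-related to $V$. Restricting attention to exact forms $\beta=df$ gives $V=\xi_f$ and $W=\xi_{\phi^*f}$, so the implication ``$\Cot\phi$ complete $\Rightarrow\phi$ complete'' is immediate.

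The substance of the proposition, and the step I expect to be the main obstacle, is the reverse implication: upgrading completeness from Hamiltonian (exact) sections to arbitrary $1$-forms. Given $\beta$ with $V=\Pi_Y^\sharp\beta$ complete, consider a maximal integral curve of $W$ and suppose it had finite escape time $b$; its $\phi$-image is the complete flow line $\gamma=\{\psi^V_t(y_0):t\in[0,b]\}$, a compact arc in $Y$. The plan is to replace $\beta$, along $\gamma$ only, by an exact form: I would construct a compactly supported $\tilde f\in C^{\infty}(Y)$ with $d\tilde f=\beta$ at every point of $\gamma$. The one potential obstruction---that the values of $\tilde f$ forced by integrating $d\tilde f$ along $\gamma$ be inconsistent at self-intersections---does not occur, because the tangential derivative of $\tilde f$ along $\gamma$ is $\langle\beta,\dot\gamma\rangle=\langle\beta,\Pi_Y^\sharp\beta\rangle=\Pi_Y(\beta,\beta)=0$; thus $\tilde f$ is merely required to be constant along $\gamma$ with prescribed differential $\beta$ there, and such a function exists by a tubular-neighbourhood construction followed by a cutoff.

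Being compactly supported, $\xi_{\tilde f}$ is complete, so by hypothesis $\xi_{\phi^*\tilde f}$ is complete on $X$. I would then compare integral curves: since $\xi_{\tilde f}=V$ along $\gamma$, the $\xi_{\tilde f}$-integral curve through $y_0$ coincides with $\psi^V_t(y_0)$, and the complete $\xi_{\phi^*\tilde f}$-integral curve $c$ through the chosen lift of $y_0$ projects onto it, so $\phi(c(t))\in\gamma$ for $t\in[0,b]$. Because $d\tilde f=\beta$ all along $\gamma$, one has $\xi_{\phi^*\tilde f}(c(t))=W(c(t))$ there, so $c$ is a $W$-integral curve agreeing with the original one; as $c$ is defined for all time, the original curve extends past $b$, contradicting the finite escape time. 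Hence $W$ is complete and $\Cot\phi$ is a complete comorphism, which proves the remaining direction. The only delicate points are the jet-extension used to build $\tilde f$ and the uniqueness argument identifying $c$ with the original integral curve; the key simplification that makes both routine is the vanishing $\langle\beta,\Pi_Y^\sharp\beta\rangle=0$.
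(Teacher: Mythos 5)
Your proposal is correct, and it diverges from the paper's proof in instructive ways. For the first assertion the paper does not verify the comorphism axioms directly: it dualizes, noting that $T^{*}\phi$ is a Lie algebroid comorphism iff $T\phi:TX\to TY$ is a Poisson map for the tangent-lift (Lie--Poisson) structures, and then reduces this to the statement that a submanifold $C$ is coisotropic iff $TC$ is, proved by a local matrix computation. Your route --- reading the anchor condition \eqref{eq:anchor} as $d\phi\circ\Pi_{X}^{\sharp}\circ(d\phi)^{*}=\Pi_{Y}^{\sharp}$, which is the classical characterization of Poisson maps, and checking that $\phi^{*}$ respects the Koszul bracket on exact forms and then in general by the Leibniz rule --- is more elementary and equally valid; what the paper's version buys is the reusable fact about tangent lifts of coisotropic submanifolds. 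For the completeness statement the two arguments share the same core idea (reduce to compactly supported Hamiltonians, where completeness of $\phi$ applies, then piece the lifted flow back together), but they localize differently: the paper covers the image curve by opens $U_{i}$ on which it asserts $s_{|U_{i}}=df_{i}$, whereas you prescribe only the $1$-jet of a single compactly supported $\tilde f$ along the compact arc $\gamma$, using $\langle\beta,\Pi_{Y}^{\sharp}\beta\rangle=0$ to see that the prescription is consistent. Your version is in fact the more careful one: a general (non-closed) section of $T^{*}Y$ is not exact on any open set, so the paper's $f_{i}$ can only be taken to satisfy $df_{i}=s$ along the curve itself --- precisely the jet extension you describe; since $\gamma([0,b])$ is an embedded arc, an embedded circle, or a point, that extension is indeed routine. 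The remaining steps (the lifted complete curve is $\phi$-related to the downstairs one, agrees with the maximal integral curve of $W$ by uniqueness, and hence contradicts a finite escape time) are sound.
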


\begin{proof}
$T^{*}\phi$ is a comorphism if and only if its dual, the tangent
map $T\phi$ from $TX$ to $TY$, is a Poisson map with respect to
the Poisson structure on $TX$ and $TY$ inherited from being duals
of Lie algebroids. Thus we only need to show that $\phi$ is Poisson
if and only if $T\phi$ is. Now a smooth map between two Poisson manifolds
is Poisson if and only if its graph is a coisotropic submanifold of
the product of the two Poisson manifolds. Hence, the problem reduces
to showing that a submanifold $C$ of a Poisson manifold $X$ is coisotropic
if and only if $TC$ is coisotropic in $TX$. 

Recall that the Poisson structure on $TX$ can be described locally
in terms of the matrix
\[
\tilde{\Pi}_{X}(x,v)=\left(\begin{array}{cc}
0 & \Pi_{X}(x)\\
-\Pi_{X}(x) & \partial_{k}\Pi_{X}(x)v^{k}
\end{array}\right)
\]
and that we can identify a vector in $T_{(x,v)}TC$ with $\delta x\oplus\delta v\in T_{x}C\oplus T_{x}C$.
This further gives the identification of $N_{(x,v)}^{*}TC$ with $N_{x}^{*}C\oplus N_{x}^{*}C$.
Now $TC$ is coisotropic iff for all $\theta\oplus\nu\in N_{(x,v)}^{*}TC$,
we have that 
\[
\tilde{\Pi}_{X}^{\sharp}(x,v)(\theta\oplus\nu)=\Pi_{X}^{\sharp}(x)\eta\oplus(-\Pi_{X}^{\sharp}(x)\theta+\partial_{k}\Pi_{X}^{\sharp}(x)v^{k}\eta)\in T_{x}C\oplus T_{x}C,
\]
which is equivalent to $C$ being coisotropic, since $\partial_{k}\Pi_{X}^{\sharp}(x)v^{k}\eta$
is always in $T_{x}C$ provided that $C$ is coisotropic (to see this,
consider the derivative at $0$ of the curve $(x(t),\Pi_{X}(x(t))\eta)$
in $TC$ such that $x(0)=x$, $\dot{x}(0)=v$, and $\eta$ is a section
of $N^{*}C$). 

This shows that $\phi$ is Poisson if and only if $T^{*}\phi$ is
a Lie algebroid comorphism. Let us check now that $\phi$ is complete
whenever $T^{*}\phi$ is. 

First of all, a direct computation shows that the hamiltonian vector
field $\xi_{\phi^{*}f}$ where $f\in C^{\infty}(Y)$ coincides with
$\Pi_{X}^{\sharp}(T^{*}\phi)^{\dagger}df$. Moreover, by definition,
$\xi_{f}$ is complete if and only if the section $df$ is complete. 

Suppose now that the comorphism $(\phi,T^{*}\phi)$ is complete. Take
a complete hamiltonian vector field $\xi_{f}$. Then $(T^{*}\phi)^{\dagger}df$
is complete (since $df$ is complete) which implies thus that $\xi_{\phi^{*}f}$
is also complete. Therefore, the Poisson map $\phi$ is complete.

Conversely, suppose that $\phi$ is complete and let $s\in\Gamma(B)$
be a complete section. We need to show that the integral curve of
$(T^{*}\phi)^{\dagger}s$ through any $x\in X$ exists for all times.
For this, consider the integral curve $y(t)$ of $\Pi_{Y}^{\sharp}s$
passing through $y=\phi(x)$ at time $t_{0}$. Choose a partition
$\R=\cup_{i\in\Z}[t_{i},t_{i+1}]$ such that there exist a family
of open subsets $U_{i}$ and compact subsets $K_{i}$ such that $[t_{i},t_{i+1}]\subset K_{i}\subset U_{i}$
and the one-form $s$ is exact on the $U_{i}$'s (i.e. $s_{|U_{i}}=df_{i}$
for some smooth function $f_{i}$ on $U_{i}$). For each $i\in\Z$,
choose further a cut-off function $\chi_{i}$ that is $1$ on $K_{i}$
and zero outside of $U_{i}$. Since the function $\tilde{f}_{i}:=\chi_{i}f_{i}$
has compact support, its hamiltonian vector field is complete, and
thus $\xi_{\phi^{*}\tilde{f}_{i}}=\Pi_{X}^{\sharp}(T^{*}\phi)^{\dagger}d\tilde{f}_{i}$
is also complete, because $\phi$ is complete. We can now construct
the integral curve of $\Pi_{X}^{\sharp}(T^{*}\phi)^{\dagger}$ for
all times by starting at $x$ and following the flow of $\Pi_{X}^{\sharp}(T^{*}\phi)^{\dagger}d\tilde{f}_{i}$
(which exists for all times) between $t_{i}$ and $t_{i+1}$ for each
$i\in\Z$. 
\end{proof}

The graph of a Poisson map $\phi$ from $X$ to $Y$ is a coisotropic
submanifold of the Poisson manifold product $\overline{X}\times Y$.
Thus, the conormal bundle $N^{*}\graph\phi$ to this graph is a subalgebroid
of the algebroid product $\overline{T^{*}X}\times T^{*}Y$. Proposition
\ref{prop: complete} tells us that this conormal bundle is actually
the graph of the Lie algebroid comorphism $(\phi,T^{*}\phi)$ from
$T^{*}X$ to $T^{*}Y$. Applying the the path construction to this
comorphism yields a hypercomorphism 
\[
\iota:\Sigma(N^{*}\graph\phi)\rightarrow\overline{\Sigma(T^{*}X)}\times\Sigma(T^{*}Y)
\]
between the integrating symplectic groupoids, which happens to be
in general only a lagrangian immersion (see \cite{cattaneo2004} for
instance). 

Using Theorem \ref{thm: Dazord} and Proposition \ref{prop: complete},
we obtain that, for a complete Poisson map, the lagrangian immersion
$\iota$ is a closed lagrangian embedding, and its image, which we
denote by $\Sigma(\phi)$, is a closed lagrangian submanifold of $\overline{\Sigma(T^{*}X)}\times\Sigma(T^{*}Y)$.
In other words, for complete Poisson maps, $\Sigma(\phi)$ is, at
the same time, a canonical relation from $\Sigma(T^{*}X)$ to $\Sigma(T^{*}Y)$,
a lagrangian subgroupoid over the graph of $\phi$, and a comorphism
from $\Sigma(T^{*}X)$ to $\Sigma(T^{*}Y)$. 

In complete analogy with the Lie algebroid case, we can summarize
the discussion above by the following statements, some of which can
already be found in \cite{CF,da:groupoide,zakrzewski1990II}: 

\begin{prop}
(Zakrzewski \cite{zakrzewski1990II}) \label{prop: Poisson 1} Let
$G$ and $H$ be symplectic groupoids over $X$ and $Y$ respectively,
and let $(\phi,\Phi)$ be a symplectic comorphism from $G$ to $H$.
Then $\phi=\Lie(\phi,\Phi)$ is a \textbf{complete} Poisson map from
$X$ to $Y$.
\end{prop}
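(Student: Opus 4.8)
Let $G$ and $H$ be symplectic groupoids over $X$ and $Y$ respectively, and let $(\phi,\Phi)$ be a symplectic comorphism from $G$ to $H$. Then $\phi=\Lie(\phi,\Phi)$ is a complete Poisson map from $X$ to $Y$.

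The plan is to deduce this statement from its Lie-algebroid counterpart, Proposition \ref{prop: Lie}, together with the dictionary established in Proposition \ref{prop: complete}. The key observation is that a symplectic comorphism is, in particular, a Lie groupoid comorphism: its graph $\gamma_{(\phi,\Phi)}$ is a lagrangian subgroupoid of $\overline{G}\times H$, and the Lie functor $\Lie$ sends it to the graph of a Lie algebroid comorphism from $\Lie(G)$ to $\Lie(H)$. Under the identifications $\Lie(G)\simeq T^*X$ and $\Lie(H)\simeq T^*Y$ coming from the symplectic-groupoid structure (recall that $\Lie$ takes a symplectic groupoid over $X$ to the Poisson manifold $X$ whose cotangent algebroid it integrates), this Lie algebroid comorphism is precisely $(\phi,T^*\phi)$ from $T^*X$ to $T^*Y$. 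Thus $\phi$ is a Poisson map by Proposition \ref{prop: complete}, and the content of the present statement is entirely the \emph{completeness} assertion.

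First I would invoke Proposition \ref{prop: Lie} applied to the underlying Lie groupoid comorphism $(\phi,\Phi)$: since $(\phi,\Phi)$ integrates the Lie algebroid comorphism $\Lie(\phi,\Phi)=(\phi,T\Phi)$, that proposition gives immediately that $(\phi,T\Phi)$ is a \emph{complete} comorphism of Lie algebroids. Under the identification above, $(\phi,T\Phi)$ is nothing but the cotangent comorphism $(\phi,T^*\phi)$ from $T^*X$ to $T^*Y$. In other words, the symplectic hypothesis has already been converted, at the infinitesimal level, into the statement that $T^*\phi$ is a complete Lie algebroid comorphism.

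It then remains to transport completeness from the comorphism to the Poisson map, and this is exactly the equivalence proved in the last paragraph of Proposition \ref{prop: complete}: $\phi$ is complete if and only if $T^*\phi$ is. Combining these, $\phi$ is a complete Poisson map, which is the desired conclusion. The only point requiring care---and the step I expect to be the main obstacle---is the bookkeeping of the identification $\Lie(G)\simeq T^*X$: one must check that the Poisson structure on $X$ produced by the Lie functor on symplectic groupoids is the same one whose cotangent algebroid is $\Lie(G)$, so that ``complete section of $\Lie(H)$'' in Proposition \ref{prop: Lie} matches ``complete one-form $df$ on $Y$'' in Proposition \ref{prop: complete}. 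Once these two notions of completeness are verified to agree under the canonical identification, the three-line chain of implications closes the proof.
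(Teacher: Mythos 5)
Your proposal is correct and follows essentially the same route as the paper: both arguments apply Proposition \ref{prop: Lie} to conclude that the induced Lie algebroid comorphism is complete, identify that comorphism with $(\phi,T^*\phi)$ (the paper phrases this via the lagrangian subgroupoid integrating $N^*\graph\phi$, you via the identification $\Lie(G)\simeq T^*X$, which is the same content), and then invoke the completeness equivalence of Proposition \ref{prop: complete}. The bookkeeping point you flag is exactly what the paper settles by observing that $N^*\graph\phi$ is the graph of the comorphism $T^*\phi$.
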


\begin{proof}
The lagrangian subgroupoid $\gamma_{(\phi,\Phi)}\rightrightarrows\graph\phi$
integrates the coisotropic submanifold $\graph\phi$, and, thus, integrates
the corresponding Lie subalgebroid $N^{*}\graph\phi$ (see \cite{cattaneo2004}
for instance), which is nothing but the graph of the comorphism $T^{*}\phi$
from $T^{*}X$ to $T^{*}Y$. By Proposition \ref{prop: Lie}, we have
then that $T^{*}\phi$ is complete because integrable, and, hence,
that $\phi$ is complete by Proposition \ref{prop: complete}
\end{proof}

\begin{thm}
\label{thm: Poisson 2} The path construction $\Sigma$ is a functor
from the category of integrable Poisson manifolds and complete Poisson
maps to the category of source $1$-connected symplectic groupoids
and symplectic comorphisms. It is an inverse to the Lie functor $\Lie$,
and, thus, implements an equivalence between these two categories. 
\end{thm}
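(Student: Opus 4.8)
The plan is to transport Theorem \ref{thm: Dazord} from the Lie algebroid setting to the Poisson setting through the cotangent functor $X\mapsto\Cot X$, using Proposition \ref{prop: complete} as the precise dictionary between the two worlds. The strategy is to exhibit faithful embeddings $\Poiss^{\mathrm{cpl}}\hookrightarrow\Algdminus$ and $\SGpd\hookrightarrow\Gpdminus$ (where $\Poiss^{\mathrm{cpl}}$ denotes integrable Poisson manifolds and complete Poisson maps) that are compatible with both $\Sigma$ and $\Lie$, so that the equivalence already proved downstairs restricts to the equivalence we want. On the infinitesimal side, Proposition \ref{prop: complete} shows that $X\mapsto\Cot X$ and $\phi\mapsto(\phi,\Cot\phi)$ send Poisson manifolds to their cotangent Lie algebroids and (complete) Poisson maps to (complete) comorphisms; the only thing to verify is that this assignment intertwines composition, i.e. that $\Cot(\psi\circ\phi)$ is, as a comorphism, the star-composition of $(\phi,\Cot\phi)$ with $(\psi,\Cot\psi)$, which is a direct computation with the pullback maps on one-forms. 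On the global side, $G\mapsto G$ (remembering the symplectic form) together with the observation that the graph of a symplectic comorphism is a lagrangian subgroupoid of $\overline{G}\times H$ embeds $\SGpd$ into $\Gpdminus$.

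I would then define $\Sigma$ on the Poisson subcategory by restriction: on objects $\Sigma(X):=\Sigma(\Cot X)$, the source $1$-connected symplectic groupoid integrating $X$; on morphisms $\Sigma(\phi):=\Sigma(\gamma_{(\phi,\Cot\phi)})$. Since Proposition \ref{prop: complete} identifies completeness of $\phi$ with completeness of $(\phi,\Cot\phi)$, Theorem \ref{thm: Dazord} guarantees that this image is the graph of a Lie groupoid comorphism from $\Sigma(\Cot X)$ to $\Sigma(\Cot Y)$. The step I expect to be the main obstacle is the \emph{symplectic} enhancement: showing that this comorphism graph is not merely a closed embedded subgroupoid but a lagrangian subgroupoid of $\overline{\Sigma(\Cot X)}\times\Sigma(\Cot Y)$. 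For this I would use that $\gamma_{(\phi,\Cot\phi)}=N^*\graph\phi$ is the conormal bundle of the coisotropic submanifold $\graph\phi$, and that integration sends coisotropic submanifolds to lagrangian subgroupoids; combining this with the embeddability supplied by Theorem \ref{thm: Dazord} forces the closed embedded integration to coincide with the lagrangian one. Functoriality $\Sigma(R_2)\circ\Sigma(R_1)=\Sigma(R_3)$ is then immediate from the functoriality already established in Section \ref{sub:Functoriality}, once composition of cotangent comorphisms is matched with composition of Poisson maps as above.

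Finally, to see that $\Sigma$ is inverse to $\Lie:\SGpd\to\Poiss$, I would again piggyback on the equivalence of Theorem \ref{thm: Dazord}. On objects, $\Lie(\Sigma(\Cot X))=\Cot X$ recovers $X$ with its Poisson structure, since the Lie functor returns the base Poisson manifold of a symplectic groupoid; on morphisms, Proposition \ref{prop: Poisson 1} together with the Lie algebroid equivalence gives $\Lie(\Sigma(\phi))=\phi$. Conversely, $\Sigma\circ\Lie$ is naturally isomorphic to the identity: the components $\alpha_G=\mathbf{ev}\circ D^{-1}$ constructed in Section \ref{sub:Equivalence} are in fact isomorphisms of \emph{symplectic} groupoids, because $D$ is built from left-translations, which are symplectomorphisms in a symplectic groupoid, and the naturality square commutes for symplectic comorphisms by exactly the chain of arguments in Lemma \ref{lem: comutation in bin} through Lemma \ref{lem: alphas}, now read inside $\SGpd\subset\Gpdminus$. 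This establishes the stated equivalence of categories.
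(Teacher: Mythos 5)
Your proposal is correct and follows essentially the same route as the paper: reduce to Theorem \ref{thm: Dazord} via the cotangent dictionary of Proposition \ref{prop: complete}, identify $\gamma_{(\phi,\Cot\phi)}$ with the conormal bundle $N^*\graph\phi$ of the coisotropic graph so that integration produces a lagrangian subgroupoid (hence a symplectic comorphism), and use Proposition \ref{prop: Poisson 1} together with the natural isomorphism $\alpha_G$ for the inverse-to-$\Lie$ direction. The only cosmetic slip is your justification that $\alpha_G$ is symplectic via ``left-translations are symplectomorphisms'' (left translation by a single element is not a symplectomorphism of $G$), but the conclusion is standard and the paper does not argue this point either.
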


\begin{cor}
(Caseiro-Fernandes \cite{CF}, Dazord \cite{da:groupoide}, Zakrzewski \cite{zakrzewski1990II})
\label{cor: Poisson 3} Let $X$ and $Y$ be two integrable Poisson
manifolds with source $1$-connected integrating symplectic groupoids
$G$ and $H$. Then a Poisson map $\phi$ from $X$ to $Y$ integrates
to a (unique) symplectic comorphism $(\phi,\Phi)$ from $G$ to $H$
if and only if it is complete. 
\end{cor}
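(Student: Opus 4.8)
The plan is to derive this statement directly from the Poisson equivalence of Theorem \ref{thm: Poisson 2}, its converse Proposition \ref{prop: Poisson 1}, and the faithfulness of the Lie functor, exactly paralleling the way Dazord's Lie algebroid corollary was obtained from Theorem \ref{thm: Dazord}. The bridge between the Poisson and Lie algebroid pictures is Proposition \ref{prop: complete}, which identifies a Poisson map $\phi$ with the Lie algebroid comorphism $(\phi,T^{*}\phi)$ from $T^{*}X$ to $T^{*}Y$ and matches up the two notions of completeness; almost all the real work has already been done in establishing that proposition and Theorem \ref{thm: Poisson 2}, so the corollary should come out essentially by assembly.

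First I would treat the ``if'' direction. Assuming $\phi$ is complete, Proposition \ref{prop: complete} makes $(\phi,T^{*}\phi)$ a complete Lie algebroid comorphism. Since $G$ and $H$ are source $1$-connected, they are isomorphic to $\Sigma(T^{*}X)$ and $\Sigma(T^{*}Y)$, so Theorem \ref{thm: Poisson 2} applies and produces the symplectic comorphism $\Sigma(\phi)=(\phi,\Phi)$ from $G$ to $H$, whose graph is the closed lagrangian subgroupoid $\Sigma(\phi)\subset\overline{G}\times H$ integrating $\graph\phi$. This is the required integration, so $\phi$ is integrable. The ``only if'' direction is then immediate from Proposition \ref{prop: Poisson 1}: if $\phi$ integrates to a symplectic comorphism $(\phi,\Phi)$, then its core map $\phi=\Lie(\phi,\Phi)$ is a complete Poisson map.

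The only point I expect to need care is uniqueness, which I regard as the main (if minor) obstacle. Here I would observe that a symplectic comorphism is in particular a Lie groupoid comorphism, and that its image under the Lie functor is precisely the Lie algebroid comorphism $(\phi,T^{*}\phi)$ --- the same identification already used in the proof of Proposition \ref{prop: Poisson 1}. Since $(\phi,T^{*}\phi)$ is completely determined by the Poisson map $\phi$, the faithfulness of the Lie functor (Corollary \ref{prop:faithfulness}) forces the integrating Lie groupoid comorphism, and hence the symplectic comorphism $(\phi,\Phi)$, to be unique. The delicate step is matching the faithfulness statement, which is phrased for Lie algebroid comorphisms, with the symplectic setting: one must confirm that two distinct symplectic comorphisms over the same $\phi$ would yield two distinct Lie groupoid comorphisms integrating the single comorphism $(\phi,T^{*}\phi)$, contradicting Corollary \ref{prop:faithfulness}.
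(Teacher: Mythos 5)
Your proposal is correct and follows essentially the same route as the paper, which derives Corollary \ref{cor: Poisson 3} by assembling Proposition \ref{prop: complete} (to translate completeness of $\phi$ into completeness of the comorphism $(\phi,T^{*}\phi)$), Theorem \ref{thm: Poisson 2} (equivalently Theorem \ref{thm: Dazord} applied to $T^{*}X$ and $T^{*}Y$) for the ``if'' direction, Proposition \ref{prop: Poisson 1} for the ``only if'' direction, and the faithfulness of the Lie functor (Corollary \ref{prop:faithfulness}) for uniqueness. Your attention to the point that any symplectic comorphism over $\phi$ has Lie algebroid comorphism exactly $(\phi,T^{*}\phi)$ is the right way to transfer faithfulness to the symplectic setting.
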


Theorem \cite{fernandes2006} provides a rigorous foundation for constructions
in Poisson geometry involving $\Sigma$ in the spirit of Fernandes
in \cite{fernandes2006} where $\Sigma$ (which is called the ``symplectization
functor'') is only considered heuristically. 

\begin{rem}
A weaker version of Corollary \ref{cor: Poisson 3} was already stated
without proof by Dazord in \cite{da:groupoide}, where only the implication
from complete to integrable was considered. Recently, Caseiro and Fernandes in \cite{CF}
proved that the object integrating a complete Poisson map from integrable Poisson manifolds $X$ to $Y$ is
an embedded lagrangian subgroupoid of the symplectic groupoid product
$\overline{\Sigma(X)}\times\Sigma(Y)$, using the path construction and similar lifting
properties as described in Section \ref{sec:lifting}.
However, one can find
versions and proofs of both this Corollary and Proposition \ref{prop: Poisson 1}
in Zakrzewski's paper \cite{zakrzewski1990II}, which was written
even before the notion of comorphisms was formally introduced by Higgins
and Mackenzie in \cite{HM1993}. In \cite{zakrzewski1990II}, Zakrzewski
integrates complete Poisson maps not to symplectic comorphisms but
to what he called ``morphisms of $S^{*}$-algebras,'' which are
defined as special canonical relations satisfying certain algebraic
relations. One can shows that that Zakrzewski's morphisms of $S^{*}$-algebras
are nothing but symplectic comorphisms. His proof relies mostly on
the method of characteristics for coisotropic submanifolds. 
\end{rem}

\end{document}